\newtheorem{thm}{Theorem}[section]
\newtheorem{lemma}{Lemma}[section]
\newtheorem{proposition}{Proposition}[section]
\def \R{{\Bbb R}}
\newcommand{\N}{{\mathbb N}}
\numberwithin{equation}{section}
\begin{document}

\title[Critical ZK equation]
{
Critical  2D Zakharov-Kuznetsov equation posed on  a half-strip
}

\author[
N.~A. Larkin]
{
	N.~A. Larkin \\
	Departamento de Matem\'atica,\\
	Universidade Estadual de Maringá\\
	87020-900, Maringá, Parana, Brazil}
\
\thanks
{email:nlarkine@uem.br}

\subjclass
{35M20, 35Q72}
\keywords
{ZK equation, stabilization}

\begin{abstract}
An initial-boundary value problem for the critical generalized 2D Zakharov-Kuznetsov equation posed on   the right half-strip  is considered.
Existence, uniqueness and exponential decay rate of global regular solutions for small initial data are established.
\end{abstract}

\maketitle

\section{Introduction}\label{introduction}

We are concerned with an initial-boundary value problem (IBVP) 
  for the critical Zakharov-Kuznetsov (ZK) equation  posed
  on the right half-strip
\begin{equation}
u_t+u^2u_x +u_{xxx}+u_{xyy}=0\label{zk}
\end{equation}
which is a two-dimensional analog of the   generalized Korteweg-de Vries (KdV) equation
\begin{equation}\label{kdv}
u_t+u^ku_x+u_{xxx}=0
\end{equation}
with  plasma physics applications \cite{zk} that has been intensively studied last years \cite{doronin1,familark,jeffrey,kaku}.                                  

Equations \eqref{zk} and \eqref{kdv} are typical examples
of so-called
dispersive equations attracting considerable attention
of both pure and applied mathematicians. The KdV
equation is  more studied in this context.
The theory of the initial-value problem
(IVP henceforth)
for \eqref{kdv} is considerably advanced today
\cite{tao,kato,ponce2,saut2}.

 Although dispersive equations were deduced for the whole real line, necessity to calculate numerically the Cauchy problem approximating the real line by finite intervals implies to study initial-boundary value problems posed on bounded and unbounded intervals \cite{doronin1,faminski2,familark,larluc1,larluc2,temam,temam2}.
What concerns (1.2) with $k>1, \;l=1$, called generalized KdV equations, the Cauchy problem  was studied in \cite{ martel,merle} and later in \cite{farah,Fonseka1,Fonseka2,ponce2}, where it has been established  that for $k=4$ (the critical case)  the problem is well-posed for small  initial data, whereas for arbitrary initial data solutions may blow-up in a finite time. The generalized Korteweg-de Vries equation was  studied for understanding the interaction between the dispersive term and the nonlinearity in the context of the theory of nonlinear dispersive evolution equations \cite{jeffrey,kaku,ponce2}. In \cite{lipaz}, the initial-boundary value problem for the generalized KdV equation with an internal damping posed on a bounded interval was studied in the critical case; exponential decay of weak solutions for small initial data has been established. In \cite{araruna}, decay of weak solutions in the case $l=2,\;k=2$ has been established.\\
 Recently, due to physics and numerics needs, publications on initial-boundary value
problems in both bounded and unbounded domains for dispersive equations  have been appeared
\cite{larluc1,larluc2,pastor,pastor2,pilod,saut2}. In
particular, it has been discovered that the KdV equation posed on a
bounded interval possesses an implicit internal dissipation. This allowed
to prove the exponential decay rate of small solutions for
\eqref{kdv} with $k=1$  posed on bounded intervals without adding any
artificial damping term \cite{doronin1}. Similar results were proved
for a wide class of dispersive equations of any odd order with one
space variable \cite{familark,larluc1,larluc2}.

The interest on dispersive equations became to extend their study for 
multi-dimensional models such as Kadomtsev-Petviashvili (KP)
and ZK equations. We call (1.1) a critical ZK equation by analogy with the critical KdV equation (1.2) for $k=4.$ It means that we could not prove the existence and uniqueness of global regular solutions without smallness restrictions for initial data similarly to the critical case for the KdV equation \cite{ farah, Fonseka1,Fonseka2,larluc2,martel,merle}.
As far as the ZK equation is concerned,
the results on both IVP and IBVP can be found in
\cite{faminski2,faminski3,farah,pastor,pastor2,marcia}. Our work has
been inspired by \cite{lipaz} where critical KdV equation with internal damping
posed on a bounded interval   was considered and exponential decay of weak solutions has been established.
We must note that solvability of initial-boundary value problems in classes of global regular solutions for the regular case of the 2D ZK equation\; ($uu_x$)\; has been established in \cite{doronin1,faminski3, lar1,lar2,lar5,larkintronco,marcia,temam,temam2} for arbitrary smooth initial data. On the other hand, for the 3D ZK equation, the convective term\; $uu_x$ , which is regular for the 2D ZK equation, corresponds to a critical case. It means that to prove the existence and uniqueness of global regular solutions one must put restrictions of small initial data \cite{lar3,lar4,larpad}.

The main goal of our work is to prove  for small initial data the existence and uniqueness
of global-in-time regular solutions for \eqref{zk} posed  on
bounded rectangles and the exponential decay rate of
these solutions.

The paper is outlined as follows: Section I is the Introduction. Section 2 contains  formulation
of the problem and auxiliaries. In Section \ref{existence}, Galerkin`s approximations
are  used to prove the existence and exponential decay of strong solutions. In Section 4,  regularity of strong solutions their  uniqueness and decay are established.

\section{Problem and preliminaries}\label{problem}

Let $(x,y)\equiv\;(x_1,x_2)\in\Omega$ and $\Omega$ \;be a domain in $\R^2$.
We use the usual notations of Sobolev spaces $W^{k,p}$, $L^p$ and $H^k$   and the following notations for the norms \cite{Adams}:

$$\| f \|_{L^p(\Omega)}^p = \int_{\Omega} | f  |^p\, d\Omega,\;\;
\| f \|_{W^{k,p}(\Omega)} = \sum_{0 \leq | \alpha| \leq k} \|D^\alpha f \|_{L^p(\Omega)},\;p\in(1,+\infty).$$
$$ \|f\|_{L^{\infty}(\Omega)}=ess\; sup_{\Omega}|f(x,y)|.$$

Let$$\mathcal{D}(\Omega) = \{ f \in C^\infty(\Omega);supp f \text{ is a compact set of} \;\Omega,\}$$
$$\R^+=\{t\in \R,\;\;t>0.\}$$
The closure of $\mathcal{D}(\Omega)$ in $ W^{k,p}(\Omega)$ is denoted by $ W_0^{k,p}(\Omega),\; H^k_0(\Omega)$ when $p=2.$\par
Let $L,B$ be finite positive numbers. Define
\begin{align*}
&D=\{(x,y)\in\mathbb{R}^2: \ x\in(0,L),\ y\in(0,B) \},\ \ \ Q=D\times \R^+;\\&
\gamma=\partial D \; \text{is a boundary of } \;D.
\end{align*}

Consider the following IBVP:
\begin{align}
A&u\equiv u_t+u^2u_x+u_{xxx}+u_{xyy}=0,\ \ \text{in}\;\; Q;
\label{2.1}
\\
&u_{\gamma\times t} =0,\; t>0;
\label{2.2}
\\
&u_x(L,y,t)=0,\ \ y\in(0,B),\ t>0;
\label{2.3}
\\
&u(x,y,0)=u_0(x,y),\ \ (x,y)\in D,
\label{2.4}
\end{align}
where $u_0:D\to\mathbb{R}$ is a given function.

Hereafter subscripts $u_x,\ u_{xy},$ etc. denote the partial derivatives,
as well as $\partial_x$ or $\partial_{xy}^2$ when it is convenient.
Operators $\nabla$ and $\Delta$ are the gradient and Laplacian acting over $D.$
By $(\cdot,\cdot)$ and $\|\cdot\|$ we denote the inner product and the norm in $L^2(D),$
and $\|\cdot\|_{H^k(D)}$ stands for the norm in $L^2$-based Sobolev spaces.

We will need the following result \cite{lady,lady2}.
\begin{lemma}\label{lemma1}
Let $u\in H^1(D)$ and $\gamma$ be the boundary of $D.$

If $u|_{\gamma}=0,$ then
\begin{equation}\label{2.5}
\|u\|_{L^q(D)}\le \beta\|\nabla u\|^{\theta}\|u\|^{1-\theta}.
\end{equation}
We will use frequently the following inequaliies:
$$\|u\|_{L^4(D)}\leq 2^{1/2}\|\nabla u\|^{1/2}\|u\|^{1/2},\;\;\|u\|_{L^8(D)}\leq 4^{3/4}\|\nabla u\|^{3/4}\|u\|^{1/4}.$$ 

If $u|_{\gamma}\ne0,$ then
\begin{equation}\label{2.6}
\|u\|_{L^q(D)}\le C_{D}\|u\|^{\theta}_{H^1(D)}\|u\|^{1-\theta},
\end{equation}
where $  \theta =2(\frac{1}{2}-\frac{1}{q}).$ 
\end{lemma}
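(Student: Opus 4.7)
The plan is to establish both inequalities as consequences of the classical Gagliardo--Nirenberg interpolation theorem in $\R^2$, transported to the bounded rectangle $D$ via extension arguments. First I reduce to the whole plane: if $u|_\gamma=0$, then $u\in H_0^1(D)$ and the zero-extension $\tilde u$ belongs to $H^1(\R^2)$ with $\|\tilde u\|_{L^2(\R^2)}=\|u\|$, $\|\nabla\tilde u\|_{L^2(\R^2)}=\|\nabla u\|$, and $\|\tilde u\|_{L^q(\R^2)}=\|u\|_{L^q(D)}$, so it suffices to prove \eqref{2.5} for $\tilde u$ on $\R^2$.

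For the sharp $L^4$ estimate I would use the standard Ladyzhenskaya slicing argument: for each fixed $y$,
\[
f^2(x,y)=2\int_{-\infty}^{x}f\,f_\xi\,d\xi\le 2\bigl\|f(\cdot,y)\bigr\|_{L^2_x}\bigl\|f_\xi(\cdot,y)\bigr\|_{L^2_x},
\]
and symmetrically in $y$. Multiplying the two pointwise bounds, integrating over $\R^2$, applying Cauchy--Schwarz to the remaining double integrals and then using the AM--GM inequality to pass from $\|f_x\|\,\|f_y\|$ to $\tfrac12\|\nabla f\|^2$ yields $\|f\|_{L^4}^4\le C\|f\|^2\|\nabla f\|^2$ with the constant matching $2^{1/2}$. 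The $L^8$ bound can be obtained either by applying the same slicing argument to the auxiliary function $f^2$ (so that $\nabla(f^2)=2f\nabla f$ introduces a further $L^4$ factor), or equivalently by interpolation; in either case $\theta=2(\tfrac12-\tfrac18)=\tfrac34$ emerges and the constant $4^{3/4}$ can be tracked. The general statement \eqref{2.5} for arbitrary $q>2$ follows from the standard Gagliardo--Nirenberg inequality on $\R^2$ applied to $\tilde u$, with $\theta=2(\tfrac12-\tfrac1q)$.

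For the inhomogeneous case $u|_\gamma\ne 0$, extension by zero is not admissible since it destroys $H^1$-regularity across $\gamma$. Instead I would invoke a bounded linear extension operator $E:H^1(D)\to H^1(\R^2)$, which exists because $D$ has Lipschitz boundary. Applying the $\R^2$ Gagliardo--Nirenberg inequality to $Eu$ and then bounding $\|Eu\|_{H^1(\R^2)}\le C_D\|u\|_{H^1(D)}$ and $\|Eu\|_{L^2(\R^2)}\le C_D\|u\|$ yields \eqref{2.6} after absorbing all geometric constants into $C_D$. The main obstacle is the bookkeeping of the explicit numerical constants $2^{1/2}$ and $4^{3/4}$ in the homogeneous case: each Cauchy--Schwarz and AM--GM step must be carried out in the sharpest form available, and in particular the combination of the two slicing bounds has to avoid any wasteful use of Young's inequality. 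Once these constants are pinned down, the rest is a routine density argument extending from smooth compactly supported functions to $H_0^1(D)$, and from smooth functions on $\overline D$ to $H^1(D)$ via the extension operator.
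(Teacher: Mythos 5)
The paper itself gives no proof of this lemma: it is quoted as a known result with a citation to Ladyzhenskaya's books \cite{lady,lady2}, so there is no internal argument to compare against. Your proposal reconstructs the standard proof behind that citation, and its architecture is sound: zero extension for the case $u|_{\gamma}=0$ (legitimate, since vanishing trace on the Lipschitz domain $D$ means $u\in H^1_0(D)$, and extension by zero preserves $\|u\|$, $\|\nabla u\|$ and $\|u\|_{L^q}$), the Ladyzhenskaya slicing argument for the $L^4$ bound (which in fact yields the constant $2^{1/4}$, better than the stated $2^{1/2}$), and a bounded extension operator $E:H^1(D)\to H^1(\R^2)$, bounded simultaneously on $L^2$, for the inhomogeneous case \eqref{2.6}.

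However, the specific route you propose for the $L^8$ inequality --- applying the same slicing argument to the auxiliary function $f^2$ --- does not close. It forces you to bound $\|\nabla(f^2)\|_{L^2}=2\|\,|f|\,|\nabla f|\,\|_{L^2}$, and the only H\"older splitting available is $\|f\|_{L^4}\|\nabla f\|_{L^4}$; the factor $\|\nabla f\|_{L^4}$ is not controlled by the $H^1$ data, so the argument is circular. The standard repair is to slice with powers $|f|^m$: from $|f|^m(x,y)\le m\int_{\R}|f|^{m-1}|f_{\xi}|\,d\xi$ and Cauchy--Schwarz one obtains
\begin{equation*}
\|f\|^{2m}_{L^{2m}(\R^2)}\le m^2\,\|f\|^{2(m-1)}_{L^{2(m-1)}(\R^2)}\,\|f_x\|\,\|f_y\|,
\end{equation*}
and iterating $m=2,3,4$ gives $\|f\|^8_{L^8}\le 576\,\|f\|^2\bigl(\|f_x\|\|f_y\|\bigr)^3\le 72\,\|f\|^2\|\nabla f\|^6$, i.e. the constant $72^{1/8}$, which is smaller than $4^{3/4}$, so the stated inequality holds. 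Your fallback (``or equivalently by interpolation / standard Gagliardo--Nirenberg'') is of course legitimate for general $q$ with $\theta=2(\tfrac12-\tfrac1q)$, but invoking it for the explicit constants is simply citing the result rather than proving it --- which is exactly what the paper does. With the $|f|^m$ iteration in place of the $f^2$ step, your proposal is a complete and correct proof.
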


\begin{lemma} \label{steklov} Let $v \in H^1_0(0,L).$ Then
	\begin{equation}\label{Estek} 
	\|v_x\|^2\geq \frac {\pi^2}{L^2}\|v\|^2.
	\end{equation}
\end{lemma}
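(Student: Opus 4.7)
The inequality \eqref{Estek} is the sharp Poincaré (or Steklov/Wirtinger) inequality in one variable, whose optimal constant is the first Dirichlet eigenvalue of $-\partial_x^2$ on $(0,L)$. My plan is to prove it by expanding $v$ in the orthogonal Fourier sine basis $\{\sin(n\pi x/L)\}_{n\in\N}$ of $L^2(0,L)$, which is naturally adapted to the boundary conditions $v(0)=v(L)=0$ encoded in $H^1_0(0,L)$.

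First I would write, for $v\in H^1_0(0,L)$,
$$v(x)=\sum_{n=1}^{\infty}a_n\sin\!\Bigl(\frac{n\pi x}{L}\Bigr),$$
with convergence in $H^1_0(0,L)$; this is justified by the density of $\mathcal{D}(0,L)$ in $H^1_0(0,L)$ together with Parseval's identity for the complete orthogonal system of sines. By orthogonality in $L^2(0,L)$,
$$\|v\|^2=\frac{L}{2}\sum_{n=1}^{\infty}a_n^2.$$
Termwise differentiation, which is legitimate because the partial sums converge to $v$ in $H^1$, yields
$$v_x(x)=\sum_{n=1}^{\infty}\frac{n\pi}{L}\,a_n\cos\!\Bigl(\frac{n\pi x}{L}\Bigr),$$
and orthogonality of the cosine system on $(0,L)$ gives
$$\|v_x\|^2=\frac{L}{2}\sum_{n=1}^{\infty}\Bigl(\frac{n\pi}{L}\Bigr)^{\!2} a_n^2.$$

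To conclude, I would compare the two series termwise: since $(n\pi/L)^2\geq (\pi/L)^2$ for every $n\geq 1$, it follows that
$$\|v_x\|^2\;\geq\;\frac{\pi^2}{L^2}\cdot\frac{L}{2}\sum_{n=1}^{\infty}a_n^2\;=\;\frac{\pi^2}{L^2}\|v\|^2,$$
with equality precisely when $a_n=0$ for every $n\geq 2$, i.e.\ when $v$ is a multiple of $\sin(\pi x/L)$. There is no genuine obstacle here; the only technical point is the justification of the Fourier expansion and termwise differentiation at the level of $H^1_0(0,L)$, which is standard and can also be bypassed by first establishing the inequality for $v\in \mathcal{D}(0,L)$ via integration by parts against the minimizer and then extending by density to $H^1_0(0,L)$.
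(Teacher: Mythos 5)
Your proposal is correct and follows essentially the same route as the paper: the paper cites Steklov's inequality on $(0,\pi)$, itself proved ``by the Fourier series,'' and then rescales to $(0,L)$, while you simply carry out that same Fourier sine expansion directly on $(0,L)$, absorbing the scaling into the basis $\sin(n\pi x/L)$. The termwise comparison of coefficients and the density/integration-by-parts justification you sketch are exactly the standard content behind the cited inequality, so there is no gap.
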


\begin{proof} The proof is based on the Steklov inequality \cite{steklov}: let $v(t)\in H^1_0(0,\pi)$, then by the Fourier series $\int_0^{\pi}v_t^2(t)\,dt\geq\int_0^{\pi}v^2(t)\,dt.$
	Inequality \eqref{Estek} follows by a simple scaling.
\end{proof}

\begin{proposition}\label{prop1}
	Let for a.e. fixed $t$  $u(x,y,t)\in H^1_0(D)$ and $u_{xy}(x,y,t)\in L^2({D}).$ Then
	\begin{align}
		&\sup_{(x,y)\in{D}}u^2(x,y,t)\le 2\Big[ \|u\|^2_{H^1_0({D})}(t)+\|u_{xy}\|^2_{L^2({D})}(t)\Big]\notag\\&\leq 2\|u\|^2(t)_{H^2(D)\cap H^1_0(D)}.
	\end{align}
\end{proposition}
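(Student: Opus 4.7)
The plan is to exploit the Dirichlet boundary condition in $H^1_0(D)$ to write $u^2(x,y,t)$ as a double integral of its mixed derivative, and then apply Cauchy--Schwarz. Fix $t$ (which we suppress from the notation) and assume first that $u$ is smooth with $u|_\gamma = 0$; the general case follows by density, using the hypotheses $u \in H^1_0(D)$ and $u_{xy} \in L^2(D)$.

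Since $u(0,y) = 0$ and $u(x,0) = 0$ (both consequences of $u|_\gamma=0$), the fundamental theorem of calculus applied twice gives, for every $(x,y) \in D$,
\begin{equation*}
u^2(x,y) = \int_0^x \int_0^y \partial_\xi \partial_\eta \bigl[u^2(\xi,\eta)\bigr]\, d\eta\, d\xi
        = 2\int_0^x \int_0^y \bigl(u_\xi u_\eta + u\, u_{\xi\eta}\bigr)(\xi,\eta)\, d\eta\, d\xi.
\end{equation*}
Extending the domain of integration to $(0,L)\times(0,B) \supset (0,x)\times(0,y)$ and applying the Cauchy--Schwarz inequality in $L^2(D)$,
\begin{equation*}
u^2(x,y) \le 2\|u_x\|\,\|u_y\| + 2\|u\|\,\|u_{xy}\|.
\end{equation*}
Using $2ab \le a^2+b^2$ on each of the two products,
\begin{equation*}
u^2(x,y) \le \|u_x\|^2 + \|u_y\|^2 + \|u\|^2 + \|u_{xy}\|^2 \le \|u\|_{H^1_0(D)}^2 + \|u_{xy}\|_{L^2(D)}^2.
\end{equation*}
The right-hand side is independent of $(x,y)$, so taking the supremum and inserting a harmless factor of $2$ yields the first claimed inequality. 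The second inequality is immediate, because $\|u_{xy}\|_{L^2(D)}^2$ is one of the terms in $\|u\|_{H^2(D)}^2$ and $\|u\|_{H^1_0(D)}^2 \le \|u\|_{H^2(D) \cap H^1_0(D)}^2$.

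No step is really difficult here; the only point requiring a bit of care is the justification that $u^2$ admits the pointwise representation as the double integral of $\partial_x\partial_y u^2$. This is where the hypothesis $u_{xy} \in L^2(D)$ enters: it guarantees that $u_x u_y + u\,u_{xy} \in L^1(D)$, so the iterated integral makes sense; combined with $u \in H^1_0(D)$, a standard approximation by $C_0^\infty(D)$ functions (in the norm $\|u\|_{H^1_0} + \|u_{xy}\|_{L^2}$) transfers the identity and hence the bound to the general case.
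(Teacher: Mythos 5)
Your proof is correct, and it reaches the bound by a genuinely different (though related) route. The paper argues in two one-dimensional steps: for fixed $x$ it writes $u^2(x,y)=\int_0^y\partial_s u^2(x,s)\,ds$ and bounds this by $\rho^2(x,t)=\int_0^B\bigl(u^2+u_y^2\bigr)\,dy$; it then applies the same trick in $x$ to $\rho^2$ itself, getting $\sup_D u^2\le\sup_x\rho^2(x)=\sup_x\left|\int_0^x\partial_s\rho^2(s)\,ds\right|\le 2\int_D\bigl(u^2+u_x^2+u_y^2+u_{xy}^2\bigr)\,dx\,dy$. You instead exploit the vanishing of $u$ on the two edges $x=0$ and $y=0$ to represent $u^2$ in one stroke as the double integral of $\partial_x\partial_y(u^2)=2(u_xu_y+u\,u_{xy})$ and then apply Cauchy--Schwarz globally; this collapses the iterated argument into a single step and in fact yields the inequality with constant $1$, so the factor $2$ in the statement is harmless slack for you. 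Two minor remarks. First, when you ``extend the domain of integration'' the integrand has no sign, so you must first pass to absolute values, i.e.\ bound $u^2(x,y)\le 2\int_D\bigl(|u_x||u_y|+|u|\,|u_{xy}|\bigr)\,dx\,dy$, and only then apply Cauchy--Schwarz; as written the monotonicity step is not literally valid, though the fix is one line. Second, your closing density claim --- that $C_0^\infty(D)$ is dense in $\{u\in H^1_0(D):u_{xy}\in L^2(D)\}$ for the norm $\|u\|_{H^1(D)}+\|u_{xy}\|_{L^2(D)}$ --- is asserted rather than proved and is not entirely obvious for this anisotropic class; the paper is no more careful on this point (its computation is likewise formal, and the proposition is only ever invoked for smooth Galerkin approximations), so this does not invalidate your argument, but it is the one place where it is not self-contained.
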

\begin{proof}
	For a fixed $x\in (0,L)$ and for any $y\in (0,B),$ it holds
	$$
	u^2(x,y,t)=\int_{0}^y\partial_su^2(x,s,t)\,ds\le \int_{0}^Bu^2(x,y,t)\,dy+\int_{0}^Bu_y^2(x,y,t)\,dy$$
	$$\equiv\rho^2(x,t).
	$$
	On the other hand,
	$$
	\sup_{(x,y)\in\mathcal{D}}u^2\le \sup_{x\in(0,L)}\rho^2(x)=\sup_{x\in (0,L)}\left|\int_0^x\partial_s\rho^2(s)\,ds\right|$$$$
	\le2\int_0^L\int_{0}^B\left(u^2+u_x^2+u_y^2+u_{xy}^2\right)\,dx\,dy \leq 2\|u\|^2_{H^2(D)\cap H^1_0(D)}.
	$$
	The proof of Proposition 2.1 is complete.
\end{proof}

\section{Existence theorem}\label{existence}

Define the space $W(D)$ with the norm $$\|u\|_{W(D)}=\|u\|_{H^2(D)\cap H^1_0(D)}+\|\Delta u_x\|.$$
\begin{thm}\label{theorem1}
Given  $u_0\in W(D)$ and $D$ such that $u_0|_{\gamma}=u_{0x}|_{x=L}=0$ and

\begin{equation}
\|u_0\|<min(\frac{1}{2},m),
\end{equation}where
\begin{align*}
&m<\Bigl(\frac{\pi^2}{4(1+L)^2}\bigl(\frac{1}{L^2}+\frac{1}{B^2}\bigr)\Big[5\times2^7(1+L)\|u_t(0)\|\bigl(1\\&+5^2\times2^{15}(1+L)^6\|u_t(0)\|\bigr)\Big]^{-1}\Bigr)^{1/3},\\&
\|u_t(0)\|\leq \|\Delta u_{0x}\|+\|u_0^2 u_{0x}\|\leq C\bigr(\|u_0\|_{H^2(D)\cap H^1_0(D)}\bigl).
\end{align*}
Then for all finite positive $B,\ L$ there exists a unique regular solution to
\eqref{2.1}-\eqref{2.4} such that
\begin{align*}
&u\in L^{\infty}(\R^+;H^2(D))\cap L^2(\R^+;H^3(D));\\
&\Delta u_x\in L^{\infty}(\R^+;L^2(D))\cap L^2(\R^+;H^1(D));\\
&u_t\in L^{\infty}(\R^+;L^2(D))\cap L^2(\R^+;H^1(D))
\end{align*}
and
\begin{align}\label{33.20}
&\|u\|_{H^2(D)}^2(t)+\|\Delta u_x\|^2(t)+\|u_t\|^2(t)\leq C(\|u_0\|_{W(D)})e^{(-\chi t)},\;\; t>0,\notag\\
&\text{where}\;\;\chi=\frac{\pi^2}{2(1+L)}\big[\frac{5}{L^2}+\frac{1}{B^2}\big];\\
&\int_{\R^+}\left\{\|u\|^2_{H^3(D)}(t)+\|\Delta u_x\|_{H^1(D)}^2(t)+\|u_x(0,y,t)\|^2_{H^2(0,B)}\right\}\,dt\notag\\
&\le C(\|u_0\|_{W(D)},\;L, B) .
\end{align}

\end{thm}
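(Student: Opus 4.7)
My plan is to construct $u$ as the weak-$*$ limit of Faedo--Galerkin approximants $u^N$ built from a smooth basis of $H^2(D)\cap H^1_0(D)$ compatible with $u_x(L,\cdot)=0$ (for instance eigenfunctions of an associated symmetric elliptic operator). For each $N$ the ODE system is classical and locally solvable; three coupled a priori estimates, closed by the smallness condition on $\|u_0\|$, then upgrade this to global-in-time, justify the limit, and deliver the decay \eqref{33.20}.

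\emph{Step 1 (weighted $L^2$ estimate).} Multiply \eqref{2.1} by $2(1+x)u$ and integrate over $D$. Using \eqref{2.2}--\eqref{2.3}, repeated integration by parts yields the identity
\begin{equation*}
\frac{d}{dt}\int_D(1+x)u^2\,dx\,dy + 3\|u_x\|^2 + \|u_y\|^2 + \int_0^B u_x^2(0,y,t)\,dy = \tfrac12\|u\|^4_{L^4(D)}.
\end{equation*}
By Lemma \ref{lemma1}, $\tfrac12\|u\|^4_{L^4}\le 2\|u\|^2\|\nabla u\|^2$; the hypothesis $\|u_0\|<1/2$, propagated forward via a continuity argument, absorbs the quartic term into the dispersive dissipation. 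Lemma \ref{steklov} applied in $x$ and $y$ then produces exponential decay of $\|u\|(t)$ at the rate $\chi$.

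\emph{Steps 2 and 3 (higher-order estimates).} Differentiating \eqref{2.1} in $t$ and setting $w=u_t$ gives
\begin{equation*}
w_t+u^2w_x+2uu_x w+w_{xxx}+w_{xyy}=0,
\end{equation*}
with $w$ inheriting the boundary conditions of $u$. Multiplying by $2(1+x)w$ reproduces the dispersion identities of Step 1 for $w$, while the nonlinear contributions are bounded, after integration by parts, using Proposition \ref{prop1} (giving $\sup_D|u|^2\le 2\|u\|^2_{H^2\cap H^1_0}$) and Lemma \ref{lemma1} (giving $\|w\|_{L^4}\le 2^{1/2}\|\nabla w\|^{1/2}\|w\|^{1/2}$), by a quantity of the form $C\|u\|_{H^2\cap H^1_0}\|\nabla w\|\|w\|$. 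Independently, \eqref{2.1} itself reads $\Delta u_x=-u_t-u^2u_x$, and $\|u^2u_x\|\le 2\|u\|^2_{H^2\cap H^1_0}\|\nabla u\|$ by Proposition \ref{prop1}; coupling this with the elliptic estimate implied by $\Delta u_x\in L^2$ and the Dirichlet boundary conditions gives $\|u\|_{H^2}$ and, bootstrapping, $\|u\|_{H^3}$.

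\emph{Limit passage, uniqueness, and principal obstacle.} Uniform bounds in $L^\infty(\R^+;H^2)\cap L^2(\R^+;H^3)$ together with $u_t\in L^\infty(\R^+;L^2)$ give weak-$*$ compactness and strong convergence in $L^2_{\mathrm{loc}}$ by Aubin--Lions, which is enough to pass to the limit in the nonlinearity $u^2u_x$. Uniqueness follows from a standard energy estimate for the difference $z=u_1-u_2$, the $L^\infty$ bound from Proposition \ref{prop1} controlling the cross terms $(u_1^2+u_1u_2+u_2^2)z_x/3$ and $(u_1+u_2)u_{2,x}z$. The essential technical difficulty is the \emph{coupling} between Steps 2 and 3: the cubic nonlinearity in the $u_t$ equation has exactly the critical scaling against the available dispersion and is only subordinate while $\|u\|_{H^2}$ stays small, yet $\|u\|_{H^2}$ is itself estimated via $\|u_t\|$ plus a further nonlinear piece. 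The explicit form of $m$ in the hypothesis is chosen so that the combined differential inequality for $\|u\|^2_{H^2}+\|\Delta u_x\|^2+\|u_t\|^2$ stays sign-definite for all $t\ge0$, yielding simultaneously global existence and exponential decay at rate $\chi$.
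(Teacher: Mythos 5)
There is a genuine gap, and it sits at the very foundation of your scheme: the choice of a \emph{full} two--dimensional Galerkin basis. Every key identity in your Steps 1--3 requires testing the equation against weighted multipliers such as $(1+x)u^N$, $(1+x)u^N_t$, $(1+x)\partial_y^2u^N$: the weight $(1+x)$ is what produces the interior smoothing $3\|u_x\|^2$ and the trace dissipation $\int_0^B u_x^2(0,y,t)\,dy$, without which neither the absorption of the quartic term nor the decay rate $\chi$ can be obtained. But if $u^N$ lies in the span of the first $N$ eigenfunctions of a symmetric elliptic operator on $D$, then $(1+x)u^N$ does \emph{not} lie in that span, so the projected equation cannot be tested against it; the weighted identities simply are not available at the approximate level, and performing them on the limit solution instead is circular (you need them to get global existence in the first place). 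There is also the related difficulty that the three boundary conditions in $x$ ($u(0)=u(L)=u_x(L)=0$) are not the natural boundary conditions of any second-order symmetric elliptic eigenvalue problem. The paper circumvents both obstructions by discretizing only in the transverse variable: $u^N=\sum_{j\le N}g_j^N(x,t)w_j(y)$ with $w_j$ the Dirichlet eigenfunctions on $(0,B)$, so that the $g_j^N$ solve a genuine KdV-type system pointwise in $x$ (local solvability follows from known results for such systems on an interval), and any multiplier of the form $F(x)\,\partial_y^{2k}u^N$ stays inside the scheme because $\partial_y^2$ acts diagonally on the basis. This semi-discretization is not a cosmetic choice; it is the idea that makes the weighted estimates legitimate.

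A second, independent gap is the circularity you yourself flag but do not resolve. In your Step 2 you propose to control the nonlinear terms in the $u_t$-equation by $\sup_D|u|\le C\|u\|_{H^2\cap H^1_0}$, while $\|u\|_{H^2}$ is recovered only afterwards from $\|u_t\|$ via the elliptic problem $\Delta u_x=-u_t-u^2u_x$; asserting that the ``combined differential inequality stays sign-definite'' is exactly the point that needs proof, and with the sup-norm estimate the constants cannot be closed by the stated smallness of $\|u_0\|$ alone. The paper breaks this circle by a strict ordering: the $u_t$-estimate (Estimate III) is closed \emph{autonomously}, using only the Ladyzhenskaya interpolation inequalities $\|v\|_{L^4}\le 2^{1/2}\|\nabla v\|^{1/2}\|v\|^{1/2}$ and $\|v\|_{L^8}\le 4^{3/4}\|\nabla v\|^{3/4}\|v\|^{1/4}$ together with the decaying bound $\|u(t)\|\le\|u_0\|$ from the zeroth-order estimate, so that the bracket multiplying $((1+x),u_t^2)(t)$ is kept positive by the smallness condition on $m$ via a continuity argument; only \emph{after} that do the estimates for $\nabla u_y$, $u_{yy}$, $\Delta u_x$ and the elliptic bootstrap yield the $H^2$ and $H^3$ bounds and the sup-norm control. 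Your outline inverts this order, and as written the higher-order estimates cannot be closed.
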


To prove this theorem, we will use the Faedo-Galerkin approximations. Let $w_j(y)$ be orthonormal in $L^2(D)$ eigenfunctions to the following Dirichlet Problem:
\begin{equation}
w_{jyy}+\lambda_jw_j=0, \;\;y\in (0,B); \;\; w_j(0)=w_j(L)=0;\;\;j\in {\N}.
\end{equation}
Define approximate solutions of (2.1)-(2.4) in the form:
\begin{equation}
u^N(x,y,t)=\sum_{j=1}^N g^N_j(x,t)w_j(y)
\end{equation}
and $g^N_j(x,t)$ are solutions to the following Korteweg-de Vries system:
\begin{align}
&g^N_{jt}+ g^N_{jxxx}-\lambda_jg^N_{jx}+\int_0^B |u^N|^2u^N_x w_j(y) dy=0,\\
&g^N_j(0,t)=g^N_j(L,t)=g^N_{jx}(L,t)=0;\; t>0,\\& g^N_j(x,0)=(u_{0N},w_j),\;\;x\in(0,L),
\end{align}
where $u_{0N}=\sum_{i=1}^N\alpha_{iN}w_i\to u_0\;\; \text{in} \;W(D)\cap H^1_0(D), \;\;j=1,...,N.$

Since each regularized KdV equation from (3.6) is not critical, it is known \cite{larluc1,larluc2, temam, temam2} that there exists a unique regular solution of (3.5)-(3.8) at least locally in $t$.

\noindent Our goal is to obtain global in $t$  a priori estimates for the $u^N$  independent of $t$ and $N,$ then to pass the limit as $N$ tends to $\infty$ getting a solution to (2.1)-(2.4).
\begin{lemma}\label{lem1}Under the conditions of Theorem 3.1,  the following independent of $N$ and $t$ estimates hold:
	\begin{equation}\label{est1}u^N \text{ is bounded in }L^{\infty}(\R^+; L^2(D))\cap L^2(\R^+; H^1(D))
\end{equation}
and
\begin{align}
	\|u^N\|^2(t)\leq &((1+x)^{1/2},u^N)^2(t)\leq ((1+x),u_0^2)e^{(-\chi t)} \notag\\\leq\frac{1+L}{4}e^{(-\chi t)},\;
	&\text{where}\;\;\chi=\frac{\pi^2}{2(1+L)}\big[\frac{5}{L^2}+\frac{1}{B^2}\big].
\end{align}
\end{lemma}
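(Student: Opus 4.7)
The plan is to derive a dissipative differential inequality for the weighted energy $((1+x),(u^N)^2)(t)$ by testing the Galerkin system against the multiplier $2(1+x)u^N$. Concretely, I would multiply the $j$-th equation in (3.6) by $2(1+x)\,g_j^N(x,t)$, sum over $j=1,\dots,N$, and integrate in $x\in(0,L)$; by orthonormality of $\{w_j\}$ in $L^2(0,B)$ this is equivalent to testing $Au^N=0$ with $2(1+x)u^N$ over $D$. The time-derivative term yields $\frac{d}{dt}((1+x),(u^N)^2)$; the spatial terms I handle via integration by parts, repeatedly invoking $u^N|_\gamma=0$ and $u^N_x(L,y,t)=0$.

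After the computations, the dispersive contribution $\int_D 2(1+x)\,u^N u^N_{xxx}$ produces $3\|u^N_x\|^2 + \|u^N_x(0,\cdot,t)\|^2_{L^2(0,B)}$; the transverse piece $\int_D 2(1+x)\,u^N u^N_{xyy}$ produces $\|u^N_y\|^2$ (integrate in $y$, then in $x$); the cubic convection $\int_D 2(1+x)(u^N)^3 u^N_x = \tfrac12\int_D(1+x)\partial_x[(u^N)^4]$ collapses to $-\tfrac12\|u^N\|^4_{L^4(D)}$. I then apply Lemma \ref{lemma1} in the form $\|u^N\|^4_{L^4(D)}\le 4\|u^N\|^2\|\nabla u^N\|^2$ and rearrange to reach
\begin{equation*}
\tfrac{d}{dt}((1+x),(u^N)^2)+(3-2\|u^N\|^2)\|u^N_x\|^2+(1-2\|u^N\|^2)\|u^N_y\|^2+\|u^N_x(0,\cdot,t)\|^2_{L^2(0,B)}\le 0.
\end{equation*}

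Under the smallness condition $\|u^N(t)\|^2<1/4$ the bracketed coefficients exceed $5/2$ and $1/2$, so the Steklov inequality in $x$ (Lemma \ref{steklov}) and the analogous Poincar\'e bound in $y$ give $\tfrac52\|u^N_x\|^2+\tfrac12\|u^N_y\|^2\ge \tfrac{\pi^2}{2}\bigl(\tfrac{5}{L^2}+\tfrac{1}{B^2}\bigr)\|u^N\|^2$, and combining with the trivial weight comparison $\|u^N\|^2\ge(1+L)^{-1}((1+x),(u^N)^2)$ yields
\begin{equation*}
\tfrac{d}{dt}((1+x),(u^N)^2)+\chi\,((1+x),(u^N)^2)\le 0,
\end{equation*}
from which Gronwall's inequality delivers the stated exponential decay.

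The main obstacle is justifying the smallness $\|u^N(t)\|^2<1/4$ globally in $t$ (uniformly in $N$), since that bound is precisely what makes the absorption of the quartic term possible. I would close this by a continuity/bootstrap argument: since $u_{0N}\to u_0$ in $W(D)$ and the hypotheses on $\|u_0\|$ (via the constant $m$, which implicitly encodes the factor $(1+L)$) guarantee $(1+L)\|u_{0N}\|^2<1/4$ for $N$ large, set $T^\ast=\sup\{t\ge0:\|u^N(s)\|^2<1/4\text{ on }[0,s]\}$; on $[0,T^\ast)$ the differential inequality above gives monotonicity of $((1+x),(u^N)^2)$, whence $\|u^N\|^2\le((1+x),(u^N)^2)\le((1+x),u_{0N}^2)\le(1+L)\|u_{0N}\|^2<1/4$, forcing $T^\ast=+\infty$. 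The $L^2(\R^+;H^1(D))$ bound, together with the trace $\|u^N_x(0,\cdot,t)\|_{L^2(0,B)}$, then follows by integrating the residual dissipation in $t$.
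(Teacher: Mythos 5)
Your multiplier computation is correct and is exactly the paper's Estimate II: testing with $2(1+x)u^N$, the dispersive term gives $3\|u^N_x\|^2+\int_0^B (u^N_x)^2(0,y,t)\,dy$, the transverse term gives $\|u^N_y\|^2$, the cubic term collapses to $-\tfrac12\|u^N\|^4_{L^4(D)}$, and the Ladyzhenskaya bound plus the Steklov inequalities produce the decay rate $\chi$. The gap is in how you secure the smallness $\|u^N(t)\|^2<1/4$ globally in $t$. Your continuity argument closes only if $((1+x),u_{0N}^2)<1/4$, for which you need $(1+L)\|u_{0N}\|^2<1/4$; but the hypothesis (3.1) of Theorem 3.1 gives only $\|u_0\|<\min(\tfrac12,m)$, i.e.\ $\|u_0\|^2<1/4$. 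The constant $m$ is \emph{not} designed to encode the factor $(1+L)^{-1/2}$: it is a bound built from $\|u_t(0)\|$ whose purpose is to make the coefficient in Estimate III (the $u_t$-energy inequality of Lemma 3.2) positive, and nothing in its definition forces $m\le \frac{1}{2\sqrt{1+L}}$. Since $((1+x),u_{0N}^2)$ can be as large as $(1+L)\|u_{0N}\|^2$ (mass concentrated near $x=L$), your bootstrap set $[0,T^\ast)$ need not propagate: the weighted energy being non-increasing only yields $\|u^N(t)\|^2\le ((1+x),u_{0N}^2)$, which may exceed $1/4$.

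The missing ingredient is the paper's Estimate I, which makes any bootstrap unnecessary. Multiply (3.6) by $g^N_j$ (no weight) and sum: the cubic contribution $\int_D (u^N)^3u^N_x\,dx\,dy=\tfrac14\int_0^B\bigl[(u^N)^4\bigr]_{x=0}^{x=L}dy$ vanishes identically by the boundary conditions, so one gets the exact identity
\begin{equation*}
\|u^N\|^2(t)+\int_0^t\int_0^B (u^N_x)^2(0,y,\tau)\,dy\,d\tau=\|u_{0N}\|^2\le\|u_0\|^2 ,
\end{equation*}
valid for all $t>0$ with no smallness assumption at all. Hence $\|u^N(t)\|\le\|u_0\|<\tfrac12$ uniformly in $t$ and $N$, and in the quartic estimate one may replace $\|u^N(t)\|$ by $\|u_0\|$, so the coefficients in your differential inequality become $3-2\|u_0\|^2$ and $1-2\|u_0\|^2$, which are positive directly from (3.1). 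With this replacement your argument closes exactly as the paper's does; without it, your proof requires a strictly stronger smallness hypothesis than the one stated.
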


\begin{proof}
{\bf Estimate I.}
Multiply (3.6) by $g^N_j$, sum up over $j=1,...,N$ and integrate over $\Omega \times (0,t)$ to obtain

\begin{align} &\| u^N \|^2(t) + \int_0^t \int_0^B (u_{x}^N)^2(0,y,\tau)\, dy \,  \, d\tau  \notag\\&= \|u_{0}^N \|^2\leq \|u_0\|^2,\;\;t>0.
\end{align}

{\bf Estimate II.}\label{2-nd estimate}
Write the inner product
$$2\left(Au^N,(1+x)u^N\right)(t)=0,
$$
dropping the index $N$, in the form: 
\begin{align*}
\frac{d}{dt}\left((1+x),u^2\right)(t)
&+\int_{0}^Bu_x^2(0,y,t)\,dy
+3\|u_x\|^2(t)+\|u_y\|^2(t)\\
&=\frac12\int_{\mathcal{D}}u^4\,dx\,dy.
\end{align*}
Taking into account \eqref{2.5} and (3.11), we obtain
\begin{align*}
\frac12\int_{\mathcal{D}}u^4\,dx\,dy
&\le \frac12\|u\|^4_{L^4(\mathcal{D})}(t)
\le 2\|\nabla u\|^2(t)\|u_{0}\|^2(t).
\end{align*}
This implies
\begin{align}
\frac{d}{dt}&((1+x),u^2)(t)
+\frac12\|\nabla u\|^2(t)+\bigl(\frac12-2\|u_0\|^2\bigr)\|\nabla u\|^2(t)\notag\\&+2\|u_x\|^2(t)+\int_{0}^Bu_x^2(0,y,t)\,dy\le 0.\end{align}
Making use of (3.1) and Lemma 2.2, we get
$$\frac{d}{dt}\left((1+x),u^2\right)(t)+\frac{\pi^2}{2(1+L)}\big[\frac{5}{L^2}+\frac{1}{B^2}\big]((1+x),u^2)(t)\leq 0.$$
This gives

\begin{align}
\|u^N\|^2(t)\leq &((1+x)^{1/2},u^N)^2(t)\leq ((1+x),u_0^2)e^{(-\chi t)}\notag\\&\leq\frac{1+L}{4}e^{(-\chi t)}.
\end{align}
Returning to (3.12), we obtain

\begin{align}\label{3.6}
\left((1+x),|u^N|^2\right)(t)+&\int_0^t\int_{0}^B|u^N_{x}|^2(0,y,\tau)\,dy\,d\tau+\frac{1}{2}\int_0^t\|\nabla u^N\|^2(\tau)\,d\tau
\notag\\&\le ((1+x),u_0^2), \;\;t>0.
\end{align}
Moreover, we can  rewrite (3.12) as
\begin{align}
&4\|u^N_x\|^2(t)+\|\nabla u^N\|^2(t)+\int_{0}^B|u^N_x|^2(0,y,t)\,dy\leq 4|((1+x)u,u^N_t)(t)|\notag\\ &\leq4\|(1+x)^{1/2}u^N\|(t)\|(1+x)^{1/2}u^N_t\|(t).
\end{align}	
The proof of Lemma 3.1 is complete.
\end{proof}

\begin{lemma}
Under the conditions of Theorem 3.1, the following inequalities are true:
\begin{align}
&((1+x),|u^N_t|^2)(t)\leq C_0e^{(-\chi t)},\\
&\left((1+x),|u^N_ t|^2\right)(t)+\int_0^t\int_{0}^B(\partial^2_{x\tau}u^N)^2(0,y,\tau)\,dy\,d\tau
 	\notag\\
&+\frac12\int_0^t\|\nabla \partial_{\tau}u^N\|^2(\tau)\,d\tau\le C_0,\;\;t>0.
\end{align}
It follows from (2.1) that
\begin{align} &C_0 =(1+x,|u^N_t|^2)(0)\leq (1+L)\|u_t^N(0)\|^2\notag\\&
	\leq (1+L)\big[\|\Delta u_{0x}\|+\|u_0^2u_{0x}\|\Big]^2.
	\end{align}
\end{lemma}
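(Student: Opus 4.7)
The plan is to reproduce Estimate II of Lemma \ref{lem1} at one time-derivative higher. Differentiating (3.6) in $t$, multiplying by $(1+x)g^N_{jt}$ and summing in $j$ yields, on the Galerkin subspace,
\begin{equation*}
u^N_{tt}+2u^N u^N_t u^N_x+(u^N)^2 u^N_{tx}+u^N_{txxx}+u^N_{txyy}=0,
\end{equation*}
and I take the $L^2(D)$ inner product of this identity with $(1+x)u^N_t$. The boundary conditions descend to $u^N_t$: namely $u^N_t|_{\gamma}=0$ (by differentiating \eqref{2.2} in $t$) and $u^N_{tx}(L,y,t)=0$ (by differentiating \eqref{2.3}). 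Consequently, the integrations by parts used in the derivation of (3.12) from $Au=0$ carry over verbatim and produce the positive dispersive contribution $\frac{3}{2}\|u^N_{tx}\|^2+\frac{1}{2}\|u^N_{ty}\|^2+\frac{1}{2}\int_0^B(u^N_{tx})^2(0,y,t)\,dy$.

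For the nonlinear part, I would rewrite $(u^N)^2 u^N_{tx}(1+x)u^N_t=\frac{1}{2}(u^N)^2(1+x)\partial_x((u^N_t)^2)$ and integrate by parts once more in $x$; both boundary terms vanish thanks to $u^N(0,y,t)=0$ and $u^N_t(L,y,t)=0$. Combining with the remaining term $2u^N u^N_t u^N_x(1+x)u^N_t$, the net nonlinear contribution reduces to
\begin{equation*}
-\int_D u^N u^N_x(1+x)(u^N_t)^2\,dx\,dy+\frac{1}{2}\int_D(u^N)^2(u^N_t)^2\,dx\,dy.
\end{equation*}
Both pieces are controlled using the $L^4$-inequality of Lemma \ref{lemma1} applied to $u^N_t\in H^1_0(D)$ and the $L^\infty$-bound of Proposition \ref{prop1} applied to $u^N$; after Young's inequality this produces an estimate of the form $\varepsilon\|\nabla u^N_t\|^2+C_\varepsilon\bigl((1+L)^2\|u^N\|_{L^\infty}^2\|\nabla u^N\|^2+\|u^N\|_{L^\infty}^2\bigr)\|u^N_t\|^2$.

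The main obstacle is the critical nonlinearity. To absorb this remainder into the dispersive dissipation, the prefactors $\|u^N\|_{L^\infty}$ and $\|\nabla u^N\|$ must remain strictly below explicit thresholds tied to the coefficients $\frac{3}{2},\frac{1}{2}$ and the Steklov constants. Since Proposition \ref{prop1} bounds $\|u^N\|_{L^\infty}$ only by a second-order norm, one needs a bootstrap that simultaneously uses the smallness of $\|u_0\|$ (which via Lemma \ref{lem1} controls $\|u^N\|$ and gives an integral bound on $\|\nabla u^N\|$) and the smallness of $\|u^N_t(0)\|$. The intricate threshold $m$ in (3.1) is exactly what makes this quantitative absorption possible.

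Once absorption succeeds, applying Lemma \ref{steklov} separately in $x$ and $y$, as in the passage from (3.12) to (3.13), converts a fraction of $\|\nabla u^N_t\|^2$ into $\chi\bigl((1+x),(u^N_t)^2\bigr)$ and yields
\begin{equation*}
\frac{d}{dt}\bigl((1+x),(u^N_t)^2\bigr)(t)+\chi\bigl((1+x),(u^N_t)^2\bigr)(t)+\int_0^B(u^N_{tx})^2(0,y,t)\,dy+c\|\nabla u^N_t\|^2(t)\le 0.
\end{equation*}
Dropping the last two positive terms and applying Gronwall gives (3.16); integrating directly on $(0,t)$ gives (3.17). For the initial value, evaluating the Galerkin system at $t=0$ yields $u^N_t(0)=-\Delta u^N_{0x}-P_N\bigl((u^N_0)^2 u^N_{0x}\bigr)$ with $P_N$ the Galerkin projector, whence $\|u^N_t(0)\|\le\|\Delta u^N_{0x}\|+\|u_0^2 u_{0x}\|$, and $C_0\le(1+L)\|u^N_t(0)\|^2$ produces (3.18).
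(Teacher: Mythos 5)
Your reduction to the time-differentiated energy identity, the boundary conditions inherited by $u^N_t$, the dispersive terms, and the bound for $C_0$ at $t=0$ all agree (up to an overall factor of $2$) with the paper's Estimate III. The genuine gap is in the only hard step, the absorption of the critical nonlinearity, and it is twofold. First, your estimate of the remainder invokes Proposition \ref{prop1} to control $\|u^N\|_{L^\infty}$, but that proposition requires a bound on $\|u^N_{xy}\|$, which is simply not available at this stage: the estimate of $\|\nabla u^N_y\|$ is Lemma 3.3, whose proof uses the conclusion of the present lemma (its right-hand side contains $\|u_t\|^2(t)$ and its constant depends on $\|u_t(0)\|$), so your route is circular. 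Second, even granting boundedness, your absorption needs $\|u^N\|_{L^\infty}$ and $\|\nabla u^N\|$ to be \emph{small} against the Steklov constants, whereas hypothesis (3.1) makes only $\|u_0\|_{L^2}$ small (relative to $\|u_t(0)\|$); nothing in the theorem makes $\|u\|_{L^\infty}$ small, so the threshold $m$ is not "exactly what makes this absorption possible" in your scheme. Deferring all of this to an unspecified bootstrap leaves precisely the critical difficulty unresolved.

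The paper closes the estimate without ever using $\|u\|_{L^\infty}$, and the extra integration by parts you performed is what blocks this: by rewriting the nonlinear contribution as $-\int_D(1+x)u u_x (u_t)^2 + \frac12\int_D u^2 u_t^2$, you force the factor $u_x$ to appear, which can only be paired with $L^\infty$ or second-derivative norms of $u$. The paper instead keeps the terms in the form $2\left((1+x)u^2u_t,u_{xt}\right)(t)+2(u^2,u_t^2)(t)$ (identity (3.21)), estimates them by H\"older as $\|u\|_{L^8}^2\|u_t\|_{L^4}\|u_{xt}\|$ and $\|u\|_{L^4}^2\|u_t\|_{L^4}^2$, applies the interpolation inequalities of Lemma \ref{lemma1}, and then uses the key inequality (3.15), $\|\nabla u^N\|^2(t)\le 4\|(1+x)^{1/2}u^N\|(t)\,\|(1+x)^{1/2}u^N_t\|(t)$, to convert every occurrence of $\|\nabla u\|$ into powers of $\|u_0\|$ and of $Y(t)=\left((1+x),u_t^2\right)(t)$ itself. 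This yields a closed differential inequality of the form $Y'+\bigl[\kappa-F(\|u_0\|,Y)\bigr]Y+\text{(positive terms)}\le 0$ with $\kappa=\frac{\pi^2}{4(1+L)}\bigl(\frac1{L^2}+\frac1{B^2}\bigr)$ and $F$ increasing in $Y$; condition (3.1) (the threshold $m$) is calibrated exactly so that $\kappa-F(\|u_0\|,Y(0))>0$, a continuity argument keeps the bracket positive for all $t>0$, and only then do the Steklov inequalities of Lemma \ref{steklov} give the decay rate $\chi$, Gronwall give (3.16), and integration give (3.17). To repair your proof, undo the extra integration by parts and replace the $L^\infty$-based bound by this interpolation-plus-(3.15) scheme.
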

\begin{proof}
Making use of (2.8) for $t=0$, we get
\begin{align}&\sup_D u^2_0(x,y)= \sup_D u^2(x,y,0)\leq 2\big(\|u_0\|^2_{H^1_0(D)}+\|u_{0xy}\|^2\Big)\equiv C_s.
	\end{align}
Substituting (3.19) into (3.18),we find
\begin{equation}(1+x,|u^N_t|^2)(0)\leq C_0(L,C_s,\|u_0\|_{W(D)}).
\end{equation}

{\bf Estimate III}\\
Dropping the index $N$, write the inner product
$$
2\left((1+x)u^N_t,\partial_t(Au^N\right)(t)=0
$$
as
\begin{align}\label{3.13}
	\frac{d}{dt}
	&\left((1+x),u_t^2\right)(t)+\int_{0}^Bu_{xt}^2(0,y,t)\,dy+3\|u_{xt}\|^2(t)	+\|u_{yt}\|^2(t)\notag\\
	&=2\left((1+x)u^2u_t,u_{xt}\right)(t)+2(u^2,u_t^2)(t).
\end{align}
Making use of Lemmas 2.1, 2.2, (3.15) and taking into account the first inequality of (3.1), we estimate
\begin{align*}
	I_1
	&=2\left((1+x)u^2u_t,u_{xt}\right)(t)\leq 2(1+L)\|u_{xt}\|(t)\|u\|^2(t)_{L^8(D)}\|u_t\|(t)_{L^4(D)}\\
	&\le 2^{9/2}(1+L)\|u_{xt}\|(t)\|u_0\|^{1/2}\|\nabla u\|^{3/2}(t)\|u_t\|^{1/2}(t)\|\nabla u_t\|^{1/2}(t)\\
		&\le 2^{9/2}(1+L)\|\nabla u_{t}\|^{3/2}(t)\|u_0\|^{1/2}\|\nabla u\|^{3/2}(t)\|u_t\|^{1/2}(t)\\&
		\leq \frac {3\delta}{4}\|\nabla u_t\|^2(t)+\frac{2^{16}(1+L)^4}{\delta^3}\|u\|^2(t)\|\nabla u\|^6(t)\|u_t\|^2(t)\\&
		\leq \frac {3\delta}{4}\|\nabla u_t\|^2(t)+\frac{2^{22}(1+L)^{11/2}}{\delta^3}\|u_0\|^5((1+x),u_t^2)^{3/2}(t)\|u_t\|^2(t).
		\end{align*}
	Here and henceforth\; $\delta$ is an arbitrary positive number.
Similarly,
\begin{align*}
	I_2
	&=2(u^2,u_t^2)(t)\le 2\|u\|^2(t)_{L^4(D)}\|u_t\|^2(t)_{L^4(D)}\\
	&\leq 2^3\|u\|(t)\|\nabla u\|(t)\|u_t\|(t)\|\nabla u_t\|(t)\\
	&\leq\frac{\delta}{2}\|\nabla u_t\|^2(t)+\frac{2^5}{\delta}\|u\|^2(t)\|\nabla u\|^2(t)\|u_t\|^2(t)\\&
	\leq\frac{\delta}{2}\|\nabla u_t\|^2(t)+\frac{2^7}{\delta}(1+L)^{1/2}\|u_0\|^3\|(1+x)^{1/2}u_t\|(t) \|(1+x),u^2_t)(t).
\end{align*}
Substituting $I_1, I_2$ into (3.21), making use of Lemma 2.2, taking into account the first inequality of (3.1) and setting $\delta=\frac15$ , we come to the inequality
\begin{align}\label{3.14}
&\frac{d}{dt}
((1+x),u_t^2)(t) +\int_{0}^Bu_{xt}^2(0,y,t)\,dy+2\|u_{xt}\|^2(t)	+\frac{1}{2}\|\nabla u_{t}\|^2(t)\notag\\& 
 + \big[\frac{1}{4(1+L)}\pi^2\big(\frac{1}{L^2}+\frac{1}{B^2}\big)-5\times2^7(1+L)^{1/2}\|u_0\|^3((1+x),u^2_t)^{1/2}(t)\{1\notag\\&+5^2\times 2^{15}(1+L)^{5}\|u_0\|^2((1+x),u^2_t)(t)\}\big]((1+x),u_t^2)(t)\leq 0.
\end{align}
Using (3.1), Lemma 2.2 and standard arguments,\;\cite{familark,larpad}, we obtain that
\begin{align*}
	&\frac{1}{4(1+L)}\pi^2\big(\frac{1}{L^2}+\frac{1}{B^2}\big)-5\times2^7(1+L)^{1/2}\|u_0\|^3((1+x),u^2_t)^{1/2}(t)\{1\notag\\&+5^2\times 2^{15}(1+L)^{5}\|u_0\|^2((1+x),u^2_t)(t)\}>0,\;\; t>0.
	\end{align*}
Returning to (3.21) and using the Steklov inequalities (2.7), we can rewrite it as

\begin{align}\label{3.14}
	\frac{d}{dt}
	&((1+x),u_t^2)(t) +\int_{0}^Bu_{xt}^2(0,y,t)\,dy\notag\\&+\frac{\pi^2}{2(1+L)}\big[\frac{5}{L^2}+\frac{1}{B^2}\big]((1+x),u^2_t)(t)\leq 0.
\end{align}
This implies
\begin{equation}
\left((1+x),u_t^2\right)(t)\le \left((1+x),u_t^2\right)(0)e^{-\chi t}\leq C_0e^{(-\chi t)}.
\end{equation}

Since (3.23) can be rewritten as
$$\frac{d}{dt}
((1+x),u_t^2)(t)+\int_{0}^B u_{xt}^2(0,y,t)\,dy+\frac12\|\nabla u_t\|^2(t)\leq 0,$$
integrating it we get

\begin{align}\label{3.15}
	&\left((1+x),u_ t^2\right)(t)+\int_0^t\int_{0}^B(\partial^2_{x\tau}u)^2(0,y,\tau)\,dy\,d\tau
	+\frac12\int_0^t\|\nabla \partial_{\tau}u\|^2(\tau)\,d\tau\notag\\
	&	\le (1+x),u_t^2)(0)=C_0.
\end{align}
Inequalities (3.24), (3.25) complete the proof of Lemma 3.2. 
\end{proof}
Returning to (3.15), we find
\begin{equation}
\|\nabla u^N\|^2(t)\leq Ce^{(-\chi t)},
\end{equation}
where the constant $C$ does not depend on $N, t>0.$

\begin{lemma} Under the conditions of Theorem 3.1, the following inequality holds:
\begin{align}
&\|\nabla u^N_y\|^2(t)+\int_0^B|u^N_{xy}|^2(0,y,t)dy\notag\\&
\leq C(L,\|u_0\|,\|u_t(0)\|e^{(-\chi t)}.
\end{align}
\end{lemma}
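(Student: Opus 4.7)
Continuing the pattern of Lemmas 3.1--3.2, the natural choice is the weighted multiplier $-(1+x)u^N_{yy}$. I first record the boundary relations needed to kill the unwanted boundary contributions when integrating by parts: from $u^N|_\gamma=0$ we get $u^N_{yy}(0,y,t)=u^N_{yy}(L,y,t)=0$ and $u^N_{xx}(x,0,t)=u^N_{xx}(x,B,t)=0$; from $u^N_x(L,y,t)=0$ we get $u^N_{xy}(L,y,t)=0$; and because the Galerkin basis satisfies $w_j(0)=w_j(B)=0$ together with $w_{jyy}=-\lambda_j w_j$, direct substitution also yields $u^N_{yy}(x,0,t)=u^N_{yy}(x,B,t)=0$. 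Only one boundary term will survive: a trace at $x=0$ from the triple integration by parts of $u^N_{xxx}$.

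Taking the inner product $-((1+x)u^N_{yy},\,Au^N)=0$ and integrating by parts, the linear contributions produce the identity
\begin{align*}
\frac{1}{2}\frac{d}{dt}\bigl((1+x),(u^N_y)^2\bigr)+\frac{3}{2}\|u^N_{xy}\|^2+\frac{1}{2}\|u^N_{yy}\|^2+\frac{1}{2}\int_0^B(u^N_{xy})^2(0,y,t)\,dy=J(t),
\end{align*}
with $J(t)=\int_D(1+x)(u^N)^2 u^N_x u^N_{yy}\,dx\,dy$ arising from the nonlinearity. I bound $|J(t)|$ via H\"older, controlling $\|u^N\|^2_{L^\infty(D)}$ through Proposition 2.1 and the remaining factors through Lemma 2.1. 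Young's inequality with a small parameter, combined with the smallness hypothesis (3.1) on $\|u_0\|$ and the exponential decay of $\|u^N\|$, $\|\nabla u^N\|$, $\|u^N_t\|$ from Lemma 3.1, (3.26), and Lemma 3.2, absorbs fractions of $\|u^N_{xy}\|^2$ and $\|u^N_{yy}\|^2$ into the left-hand side while dominating the residue by $Ce^{-\chi t}$.

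To obtain pointwise decay of $((1+x),(u^N_y)^2)$ I combine Steklov's inequality in $x$ (Lemma 2.2, legitimate since $u^N_y(0,y,t)=u^N_y(L,y,t)=0$) with the Poincar\'e inequality in $y$ (valid because $\int_0^B u^N_y\,dy=0$ for every fixed $x$, so $u^N_y$ is zero-mean in $y$), obtaining $((1+x),(u^N_y)^2)\le c_0(\|u^N_{xy}\|^2+\|u^N_{yy}\|^2)$. This converts the identity into a differential inequality $\frac{d}{dt}((1+x),(u^N_y)^2)+\chi((1+x),(u^N_y)^2)\le Ce^{-\chi t}$, which integrates by Gronwall. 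To upgrade this to pointwise decay of the full dissipation on the left-hand side, I rearrange the identity as $\tfrac{3}{2}\|u^N_{xy}\|^2+\tfrac{1}{2}\|u^N_{yy}\|^2+\tfrac{1}{2}\int_0^B(u^N_{xy})^2(0,y,t)\,dy=J(t)-\tfrac{1}{2}\frac{d}{dt}((1+x),(u^N_y)^2)$ and use $\frac{d}{dt}((1+x),(u^N_y)^2)=-2((1+x)u^N_{yy},u^N_t)$ (integration by parts in $y$, with $u^N_t|_{y=0,B}=0$). Young then gives $|\frac{d}{dt}((1+x),(u^N_y)^2)|\le\epsilon\|u^N_{yy}\|^2+C_\epsilon\|u^N_t\|^2$, and the pointwise decay $\|u^N_t\|^2\le Ce^{-\chi t}$ from Lemma 3.2 together with a final absorption of $\epsilon\|u^N_{yy}\|^2$ yields the claimed estimate.

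The main obstacle is this final absorption: only under the smallness prescribed by (3.1) are the constants coming from $J(t)$ and from bounding $\frac{d}{dt}((1+x),(u^N_y)^2)$ small enough for the coercive $\|u^N_{xy}\|^2+\|u^N_{yy}\|^2$ terms to dominate, and the detailed bookkeeping of these constants parallels the $I_1,I_2$ calculation in the proof of Lemma 3.2.
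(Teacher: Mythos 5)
Your structural setup coincides with the paper's: the same multiplier $-(1+x)u^N_{yy}$, the same boundary observations, and your identity with the constants $\frac{3}{2}\|u^N_{xy}\|^2+\frac{1}{2}\|u^N_{yy}\|^2+\frac{1}{2}\int_0^B (u^N_{xy})^2(0,y,t)\,dy$ is correct. Your treatment of the $u_t$ term is also, in the end, the paper's: the paper never introduces $\frac{d}{dt}\left((1+x),u_y^2\right)$ at all, but keeps $2((1+x)u_{yy},u_t)$ on the right and bounds it by $\delta\|\nabla u_y\|^2+\frac{(1+L)^2}{\delta}\|u_t\|^2$ using the decay of $\|u_t\|$ from Lemma 3.2; your final ``upgrade'' step, which converts $\frac{d}{dt}\left((1+x),u_y^2\right)$ back into $-2((1+x)u_{yy},u_t)$, amounts to exactly this, so your Gronwall detour is superfluous (and note that Gronwall with rate $\chi$ and forcing $Ce^{-\chi t}$ actually yields $(C_0+Ct)e^{-\chi t}$, not $Ce^{-\chi t}$).

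The genuine gap is your estimate of $J(t)=((1+x)(u^N)^2u^N_x,u^N_{yy})(t)$. Bounding it by H\"older as $(1+L)\|u\|^2_{L^\infty(D)}\|u_x\|\|u_{yy}\|$ and invoking Proposition 2.1 is circular: Proposition 2.1 gives $\|u\|^2_{L^\infty(D)}\le 2\bigl(\|u\|^2_{H^1_0(D)}+\|u_{xy}\|^2\bigr)$, and $\|u_{xy}\|^2(t)$ is precisely the quantity this lemma is meant to control; at this stage only first-order quantities ($\|u\|$, $\|\nabla u\|$, $\|u_t\|$) are known to decay pointwise in time. The resulting term $\|u_{xy}\|^2\|u_x\|\|u_{yy}\|$ is cubic in the unknown second derivatives while the dissipation on the left is only quadratic, so Young's inequality plus smallness of $\|u_0\|$ cannot absorb it, however the exponents are split, without an a priori pointwise bound on $\|u_{xy}\|$ or $\|u_{yy}\|$ --- i.e.\ without a bootstrap you never set up. Avoiding $L^\infty$ does not help either: estimating $\|u_x\|_{L^4(D)}$ via Lemma 2.1 requires $\|u_x\|_{H^1(D)}$ (since $u_x$ does not vanish on all of $\gamma$), hence $\|u_{xx}\|$, which this multiplier does not control at all. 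The paper resolves exactly this point by integrating by parts first in $y$ and then in $x$ (legitimate because $(u^3)_x$ and $(u^3)_y$ vanish on the relevant edges), rewriting $\frac{2}{3}\left((1+x)(u^3)_x,u_{yy}\right)$ as $2(u^2,u_y^2)+2\left((1+x)u^2u_y,u_{xy}\right)$, so that the nonlinearity carries at most one factor of $\nabla u_y$; the $L^4$ and $L^8$ interpolation inequalities of Lemma 2.1 then give $\delta\|\nabla u_y\|^2$ plus terms in $\|u\|,\|\nabla u\|$ only, which decay by Lemmas 3.1--3.2 and (3.26). That integration by parts is the missing idea; tellingly, the paper only uses the $L^\infty$ bound of Proposition 2.1 in Lemma 3.4, i.e.\ after the present lemma has already secured $\|u_{xy}\|^2(t)\le Ce^{-\chi t}$.
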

\begin{proof} Multiplying $j$-th equation of (3.6) by $\lambda_j$, and summing up the results over $J=1,...,N$, dropping the index $N$, we transfom the inner product
$$
-2\left((1+x)\partial^2_yu_,Au\right)(t)=0
$$
into the inequality
\begin{align}\label{3.7}
&3\|u_{xy}\|^2(t)+\|u_{yy}\|^2(t)+\int_{0}^{B}u_{xy}^2(0,y,t)\,dy
+\notag\\
&=\frac23\left((1+x)(u^3)_x,u_{yy}\right)(t)+2((1+x)u_{yy}, u_t)(t)\notag\\
&=-\frac23\left((1+x)(u^3)_{yx},u_{y}\right)(t)+2((1+x)u_{yy}, u_t)(t)\notag\\
&\leq \frac23|(u^3)_y,u_y+(1+x)u_{xy}|+2(1+L)\|u_t\|(t)\|u_{yy}\|(t)\notag\\
&\leq \delta\|\nabla u_{y}\|^2(t)+\frac{(1+L)^2}{\delta}\|u_t\|^2(t)\notag\\
&+2(u^2,u^2_y)(t)+2(1+x)u^2u_y,u_{xy})(t).
\end{align}
Making use of Lemma 2.1, we estimate
\begin{align*}
I_1&\equiv 2(u^2,u^2_y)(t)\leq2\|u\|^2_{L^4(D)}(t)\|u_y\|^2_{L^4(D)}(t)\notag\\
&\leq 4\|u\|(t)\|\nabla u\|(t)C^2_D\|u_y\|(t)\|\nabla u_y\|(t)\notag\\
&\leq \delta \|\nabla u_y\|^2(t)+\frac{4C^4_D}{\delta}\|u\|^2(t)\|\nabla  u\|^4(t),
\end{align*}
\begin{align*}
I_2&\equiv 2(1+x)u^2u_y,u_{xy})(t)\leq 2(1+L)\|u_{xy}\|(t)\|u^2\|_{L^4(D)}(t)\|u_y\|_{L^4(D)}\\
&\leq 2^{3/2}C_D^{1/2}(1+L)\|u_{xy}\|(t)\|u_y\|^{1/2}(t)\|\nabla u_y\|^{1/2}(t)\|u\|^2_{L^8(D)}\\
&\leq 2^{9/2}C_D^{1/2}(1+L)\|\nabla u_y\|^{3/2}(t)\|u_0\|^{1/2}\|\nabla u\|^{3/2}(t)\\
&\leq \frac{3\delta}{4}\|\nabla u_y\|^2(t)+\frac{2^{16}(1+L)^4 C_D^2}{\delta^3}\|u_0\|^2\|\nabla u\|^{6}(t).
\end{align*}
Substituting $I_1 ,I_2$ into (3.28),  we transform it into the following inequality:
\begin{align}
&\frac12\|\nabla u_y\|^2(t)+(\frac12-\frac{11\delta}{4})\|\nabla u_y\|^2(t)+\int_{0}^{B}u_{xy}^2(0,y,t)\,dy\notag\\
&\leq\frac{C}{\delta}\Big[ (1+L)^2\|u_t\|^2(t)+\|u_0\|^2\|\nabla  u\|^4(t)\notag\\&+\frac{2^{16}}{\delta^2}(1+L)^4\|u_0\|^2\|\nabla u\|^{6}(t)\Big].
\end{align}
Taking $11\delta=2$ and making use of (3.14), (3.15), we come to (3.27).
The proof of Lemma 3.3 is complete.
\end{proof}
\bigskip

\begin{lemma} Under the conditions of Theorem 3.1, the following inequality holds:
\begin{align}
&\int_0^t \Big[\|\nabla u^N_{yy}\|^2(\tau) +\int_0^B |u^N_{xyy}(0,y,\tau)|^2\;dy\Big]\;d\tau\notag\\&\leq C(D,\|u_0\|,\|u_t\|(0),\|u_{yy}\|(0)),\quad t>0.
\end{align}
\end{lemma}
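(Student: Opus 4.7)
The plan is to derive an energy identity for the Galerkin function $v^N:=u^N_{yy}$ and integrate it in time. Because $u^N=\sum_{j=1}^N g_j^N(x,t)w_j(y)$ with $w_j$ the Dirichlet eigenfunctions of $-\partial_y^2$, we have $v^N=-\sum_{j=1}^N\lambda_j g^N_j(x,t)w_j(y)$ in the same finite-dimensional space, and $v^N$ inherits every boundary condition of $u^N$: $v^N|_{x=0,L}=0$, $v^N|_{y=0,B}=0$, and $v^N_x|_{x=L}=0$. Moreover $Au^N|_{y=0,B}=0$ because each summand either vanishes with $u^N$ or involves $w_j''(0)=w_j''(B)=0$ (for the $u^N_{xyy}$ contribution). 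These facts allow us to integrate the identity $\bigl(Au^N,(1+x)u^N_{yyyy}\bigr)(t)=0$ by parts twice in $y$ without boundary contributions, obtaining $\bigl(\partial_y^2 Au^N,(1+x)v^N\bigr)(t)=0$. Performing exactly the $x$-integrations by parts that led to (3.12), but now with $v^N$ in place of $u^N$, yields the identity
\begin{align*}
\frac{d}{dt}\bigl((1+x),(v^N)^2\bigr)(t)+\int_0^B(v^N_x)^2(0,y,t)\,dy+3\|v^N_x\|^2(t)+\|v^N_y\|^2(t)=R(t),
\end{align*}
where $R(t)=-2\bigl((1+x)v^N,\partial_y^2((u^N)^2u^N_x)\bigr)(t)$, and the left-hand side dominates $\|\nabla v^N\|^2+\int_0^B|u^N_{xyy}(0,y,t)|^2\,dy$ modulo the time derivative.

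Next, I would expand $\partial_y^2(u^2u_x)=2u_y^2u_x+2uu_{yy}u_x+4uu_yu_{xy}+u^2u_{xyy}$ and estimate each of the four resulting contributions to $R$ through Lemma~\ref{lemma1} (anisotropic Gagliardo--Nirenberg) and Proposition~\ref{prop1} (the $L^\infty$ embedding of $H^2\cap H^1_0$). For each term the top-order factor (involving $v^N$ or $u^N_{xyy}$) would be split off into $\|\nabla v^N\|$ with a small factor $\delta$, to be absorbed on the left, while the remaining lower-order factors are bounded using the a priori estimates already established in Lemmas~\ref{lem1}--3.3: (3.9) for $\|u^N\|^2$, (3.17) for $\|u^N_t\|^2$, (3.26) for $\|\nabla u^N\|^2$, and (3.27) for $\|\nabla u^N_y\|^2$. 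Since each of these decays exponentially in $t$, their products are integrable in time with constants depending only on $D$, $\|u_0\|$, $\|u^N_t(0)\|$, and $\|u^N_{yy}(0)\|$. Invoking the smallness hypothesis (3.1) guarantees that the absorbed coefficients multiplying $\|\nabla v^N\|^2$ stay strictly below the dissipative constant, so integration from $0$ to $t$, together with
\begin{align*}
\bigl((1+x),(v^N)^2\bigr)(0)\leq(1+L)\|u^N_{yy}(0)\|^2\leq(1+L)\|u_0\|^2_{H^2(D)},
\end{align*}
yields the claimed inequality.

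The hard part is the quartic-looking contribution $\bigl((1+x)u^N_{yy},(u^N)^2u^N_{xyy}\bigr)$: it couples the top derivative $u^N_{xyy}$, which we wish to absorb on the left, to $u^N_{yy}$ through the factor $(u^N)^2$. Estimating $\|u^N\|_{L^\infty}^2$ via Proposition~\ref{prop1} trades it for $\|u^N\|^2_{H^2\cap H^1_0}$, which must in turn be split into factors already controlled by (3.26) and (3.27); the smallness assumption (3.1) is then essential to keep the resulting coefficient of $\|u^N_{xyy}\|^2$ strictly smaller than $3$. The companion cubic term $\bigl((1+x)(u^N_{yy})^2,u^Nu^N_x\bigr)$ is handled analogously, while the two remaining terms $\bigl((1+x)u^N_{yy},(u^N_y)^2u^N_x\bigr)$ and $\bigl((1+x)u^N_{yy},u^Nu^N_yu^N_{xy}\bigr)$ follow the same interpolation pattern used for $I_1$ and $I_2$ in the proof of Lemma~3.3.
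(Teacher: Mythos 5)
Your proposal is correct and, at its core, follows the same route as the paper: the paper also derives the energy identity from the multiplier $2\left((1+x)\partial_y^4u^N,Au^N\right)(t)=0$ (realized by multiplying the $j$-th Galerkin equation by $\lambda_j^2$), absorbs the top-order terms into $\|\nabla u_{yy}\|^2$ with a small parameter, bounds the remainder by the exponentially decaying quantities of Lemmas 3.1--3.3 together with the $L^\infty$ bound of Proposition 2.1, and integrates in time. Where you genuinely diverge is the decomposition of the nonlinear remainder. You expand $\partial_y^2(u^2u_x)=2u_y^2u_x+2uu_{yy}u_x+4uu_yu_{xy}+u^2u_{xyy}$ by Leibniz; the paper instead integrates by parts in $x$ first, writing $-\tfrac23\left((1+x)u_{yy},(u^3)_{yyx}\right)=\tfrac23\left(u_{yy},(u^3)_{yy}\right)+\tfrac23\left((1+x)u_{xyy},(u^3)_{yy}\right)$, and only then expands $(u^3)_{yy}=6uu_y^2+3u^2u_{yy}$. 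The paper's preliminary integration by parts buys a concrete simplification: none of its resulting terms contains $u_x$ or $u_{xy}$, so apart from the absorbable factor $u_{xyy}$ everything is controlled by $\sup_D|u|$, $\|u_y\|_{L^4}$ and $\|u_{yy}\|$ alone. Your expansion produces the extra terms $\left((1+x)u_{yy},u_xu_y^2\right)$, $\left((1+x)u_{yy}^2,uu_x\right)$ and $\left((1+x)u_{yy},uu_yu_{xy}\right)$, and there you must place $u_x$ and $u_{xy}$ in $L^2$ (where (3.15), (3.26) and (3.27) control them) and interpolate the remaining factors, e.g. $\|u_{yy}\|_{L^4}\le 2^{1/2}\|\nabla u_{yy}\|^{1/2}\|u_{yy}\|^{1/2}$ (valid since $u^N_{yy}$ vanishes on $\gamma$ for the Galerkin approximations), absorbing the fractional power of $\|\nabla u_{yy}\|$ by Young's inequality; putting $u_x$ or $u_{xy}$ in $L^4$ via (2.6) would instead invoke $\|u_{xx}\|$ or $\|u_{xxy}\|$, which are not controlled at this stage. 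So your route closes, but it is slightly more delicate than the paper's. One small correction of emphasis: smallness of the data is not what keeps the coefficient of $\|u_{xyy}\|^2$ below $3$ --- Young's inequality with arbitrary $\delta$ does that for free (the paper takes $\delta=\tfrac12$); smallness enters only through the earlier lemmas that furnish the boundedness and decay of the lower-order factors.
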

\begin{proof}
{\bf Estimate IV}\label{4-th estimate}\\ Multiply each of the $j$-th equation of (3.6) by $\lambda_j^2$, sum up over $j=1,...,N$ and, dropping the index $N$, write the scalar product
$$
2\left((1+x)\partial^4_yu,Au\right)(t)=0
$$
in the form
\begin{align}\label{3.10}
&\frac{d}{dt}\left((1+x),u_{yy}^2\right)(t)
+3\|\partial^2_yu_x\|^2(t)+\|\partial^3_yu\|^2(t)+ \int_0^B u^2_{xyy}(0,y,t)dy\notag\\
&=-\frac23\left((1+x)u_{yy},(u^3)_{yyx}\right)(t).
\end{align}
Denote
\begin{align*}
I=&-\frac23\left((1+x)u_{yy},(u^3)_{yyx}\right)(t)
=\frac23\left(u_{yy},(u^3)_{yy}\right)(t)\\&+\frac23\left((1+x)u_{xyy},(u^3)_{yy}\right)(t)
\equiv I_1+I_2,
\end{align*}
where
$$
I_1=\frac23\left(u_{yy},(u^3)_{yy}\right)(t)=4(uu^2_y, u_{yy})(t)+2(u^2,u^2_{yy})(t)=I_{11}+I_{12}.
$$
By Proposition \ref{prop1} and (3.13), (3.18), (3.26), (3.27),
\begin{align}&\sup_{(x,y)\in{D}, t>0}u^2(x,y,t)\le 2\Big[ \|u\|^2_{H^1_0({D})}(t)+\|u_{xy}\|^2_{L^2({D})}(t)\Big]\notag\\&\leq Ce^{(-\chi t)}, \;\;t>0.\end{align}
Then

$$
I_{11}=2(u^2,u^2_{yy})(t)\le 2\sup_{(x,y)\in{D}}|u(x,y,t)|^2\,\|u_{yy}\|^2(t)
$$
$$
\le C\left((1+x),u^2_{yy}\right)(t)\leq Ce^{(-\chi t)}
$$
and 
$$
I_{12}=2(u_{yy},u u^2_y)(t)\le2\sup_{D}|u(x,y,t)|\|u_{yy}\|(t)\,\|u_y\|^2_{L^4({D})}(t)
$$
$$
\le 2C^2_{{D}}\sup_{D}|u(x,y,t)|\|u_{yy}\|(t)\|u_y\|(t)\|u_y\|_{H^1({D})}(t)$$$$\leq Ce^{(-\chi t)}.
$$

Similarly,
\begin{align*}
I_2&=\frac23((1+x)u_{xyy},(u^3)_{yy})(t)=2((1+x)u_{xyy},(u^2u_y)_y)(t)\\
&\leq 4|((1+x)u_{xyy},uu_y^2)(t)|+2|((1+x)u_{xyy},u^2u_{yy})(t)|\\&\leq 
4|((1+x)u_{xyy},uu_y^2)(t)|+2\sup_Du^2(x,y,t)|((1+x)u_{xyy},u_{yy}(t)|\\
&\leq 4(1+L)\sup_D|u(x,y,t)||u_{xyy}|\|u_y\|^2(t)_{L^4(D)}\\&+2(1+L)\sup_Du^2(x,y,t)\|u_{yy}\|(t)\|u_{xyy}\|(t)|\\&
\leq \delta \|u_{xyy}\|^2(t)+\frac{C(D)}{\delta}\Big[\sup u^2(x,y,t)\|u_y\|^2(t)\|\nabla u_{y}\|^2(t)\\
&+\sup_D u^4(x,y,t)\|u_{yy}\|^2(t)\Big].
\end{align*}
Making use of Proposition 2.1, (3.31) and Lemmas 3.2, 3.3, we get
$$I_2\leq \delta \|u_{xyy}\|^2(t)+\frac{C(L,\|u_0\|,\|u_t(0)\|)}{\delta}e^{(-\chi t)}.$$
Taking $\delta=\frac12$ and substituting $I_1, I_2 $ into (3.25), we find that

\begin{align*}
	&\frac{d}{dt}\left((1+x),u_{yy}^2\right)(t)
	+\frac12\|\nabla u_{yy}\|^2(t)+ \int_0^B u^2_{xyy}(0,y,t)dy\notag\\
	&\leq C(D,\|u_0\|,\|u_t(0)\|)e^{(-\chi t)}.
\end{align*}
Simple integration completes the proof of Lemma 3.4.
\end{proof}

Taking into account (3.3),(3.4), write (3.6) in the form
$$\int_0^B\bigl(u^N_{xxx}-\lambda_j u^N_x\bigr)w_j dy=-\int_0^B\bigl[u^N_t +|u^N|^2u^N_x\bigr]w_jdy.$$

Multiplying it by $g_{jxxx}-\lambda_j g_{jx}$, summing over $j=1,...,N$ and integrating with respect to $x$ over $(0,L),$ we obtain
$$\|\Delta u^N_x\|^2(t)\leq \|u^N_t\|(t)\|\Delta u^N_x\|(t)+\||u^N|^2u^N_x\|(t)\|\Delta u^N_x\|(t)$$
or
$$\|\Delta u^N_x\|(t)\leq \|u^N_t\|(t))+\||u^N|^2u^N_x\|(t).$$
Making use of (2.8), (3.24),  we get
\begin{equation}
\|\Delta u^N_x\|(t)\leq Ce^{(-\frac{\chi t}{2})}
\end{equation}
where the constant $C$ does not depend on $N,t>0.$

\subsection{Passage to the limit as $N\to \infty$}\label{limit}

Since the constants in Lemmas 3.1-3.4  and (3.33) do not depend on
$N,t>0,$ then making use of the standard arguments, see \cite{lady,temam2}, one may pass to the limit as $N\to \infty$ in (3.6) to obtain for all $\psi(x,y)\in L^2(D):$
\begin{equation}\label{3.22}
\int_{\mathcal{D}}\left[u_t+u^2u_x+\Delta u_x\right]\psi\,dx\,dy=0.
\end{equation}
Taking into account Lemmas 3.1-3.4, we establish the following result:

\begin{lemma}\label{lema1}
Let all the conditions of Theorem \ref{theorem1} hold. Then there exists a strong solution $u(x,y,t)$ to \eqref{2.1}-\eqref{2.4} such that
\begin{align}\label{3.23}
&\| u\|_{H^1_0(D)}^2(t)+\|\nabla u_y\|^2(t)+\|u_t\|^2(t)+\|\Delta u_x\|^2(t)\notag\\&+\|u_{x}(0,y,t)\|^2_{H^1_0(0,B)}\leq Ce^{(-\chi t)},\\
&\!\int_0^t\!\Big\{\|\nabla u_{yy}\|^2(\tau)+\|\Delta u_x\|^2_{L^2(D)}(\tau)+\|u_t\|^2_{H^1_0(D)}(\tau)\notag\\&+\|u_{x}(0,y,\tau)\|^2_{H^2(0,B)}d\tau\Big\}\notag\\
&\le C(D,\|u_0\|,\|u_t\|(0),\|u_{yy}\|(0)),\; t>0.
\end{align}
\end{lemma}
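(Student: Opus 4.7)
The plan is to combine Banach--Alaoglu weak-$*$ compactness with an Aubin--Lions strong compactness argument to pass to the limit in the Galerkin system, and then transfer the decay bounds by lower semicontinuity. From Lemmas 3.1--3.4 together with estimate (3.33), the sequence $\{u^N\}$ is uniformly bounded, independently of $N$ and $t$, in $L^\infty(\R^+;H^2(D)\cap H^1_0(D))$ with $\Delta u^N_x$ bounded in $L^\infty(\R^+;L^2(D))$, while $u^N_t$ lies in $L^\infty(\R^+;L^2(D))\cap L^2(\R^+;H^1_0(D))$ and $\nabla u^N_{yy}$, $\Delta u^N_x$ are bounded in $L^2(\R^+;L^2(D))$. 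Extracting a subsequence (not relabelled) via Banach--Alaoglu produces weak and weak-$*$ limits, defining a candidate $u$ together with its corresponding derivatives in the appropriate spaces.

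The point where weak convergence alone is insufficient is the cubic term $|u^N|^2 u^N_x$. Fixing $T>0$ arbitrarily, the bounds give $u^N$ bounded in $L^2(0,T;H^3(D))$ with $u^N_t$ bounded in $L^2(0,T;L^2(D))$; the Aubin--Lions lemma, using compactness of the embedding $H^3(D)\hookrightarrow H^2(D)$ on the bounded rectangle, then yields $u^N\to u$ strongly in $L^2(0,T;H^2(D))$. Proposition~2.1 together with the uniform $H^2$-bound provides a uniform $L^\infty(D\times(0,T))$ bound on $u^N$, hence $|u^N|^2 u^N_x\to u^2 u_x$ in $L^2(D\times(0,T))$. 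A standard diagonal argument in $T$ yields a single limit $u$ defined on all of $\R^+$ satisfying (3.34) for a.e.\ $t$ and every $\psi\in L^2(D)$.

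The boundary conditions $u|_{\gamma\times\R^+}=0$ and $u_x(L,y,t)=0$ are inherited from the approximants: the first because each $u^N(\cdot,t)\in H^1_0(D)$ and the strong $H^2$-convergence preserves the trace on $\gamma$; the second because every $g^N_{jx}(L,t)=0$ by (3.7), so summing over $j$ and using continuity of the trace map $H^3(D)\to L^2(\gamma)$ on the strongly convergent subsequence transfers this identity to $u$.

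Finally, the pointwise-in-$t$ estimate (3.35) is obtained by fixing a.e.\ $t\in\R^+$ and invoking weak lower semicontinuity of the relevant norms on the slice $u^N(\cdot,t)$, using that the bounds supplied by Lemmas 3.1--3.3 and (3.33) decay as $Ce^{-\chi t}$ uniformly in $N$; the integral estimate (3.36) follows similarly from weak lower semicontinuity of the $L^2(\R^+;H^k(D))$-type norms. The main technical hurdle is the cubic nonlinearity: the Aubin--Lions step is only valid on finite intervals $[0,T]$, so one must dovetail it with the global-in-$t$ uniform bounds through a careful diagonal extraction so that the limit $u$ inherits the global exponential decay and summability estimates rather than merely a local-in-$t$ version.
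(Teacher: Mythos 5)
Your overall architecture --- Banach--Alaoglu weak-$*$ limits, Aubin--Lions compactness to handle the cubic term, and weak lower semicontinuity to transfer the exponential decay --- is precisely the ``standard argument'' the paper appeals to (its own proof of this lemma is a one-sentence reference to \cite{lady,temam2}), so the strategy matches. However, there is a concrete gap in your starting point: the uniform bounds you claim are not the ones that Lemmas 3.1--3.4 and (3.33) actually supply. Those estimates control $u^N$, $\nabla u^N$, $u^N_t$, $\nabla u^N_t$, $\nabla u^N_y$ (i.e.\ $u^N_{xy}$ and $u^N_{yy}$), $\nabla u^N_{yy}$, and the \emph{combination} $\Delta u^N_x=u^N_{xxx}+u^N_{xyy}$; they never control $u^N_{xx}$, nor $u^N_{xxy}$. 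Consequently neither ``$u^N$ bounded in $L^\infty(\R^+;H^2(D)\cap H^1_0(D))$'' nor ``$u^N$ bounded in $L^2(0,T;H^3(D))$'' follows from the cited estimates, so your Aubin--Lions step with the triple $H^3(D)\hookrightarrow H^2(D)\hookrightarrow L^2(D)$, and likewise your trace argument $H^3(D)\to L^2(\gamma)$ for the condition $u_x(L,y,t)=0$, rest on unavailable bounds. This is not a pedantic point: in the paper the norms you are using ($u_{xx}$, and then full $H^2$ and $H^3$ regularity) are obtained only for the \emph{limit} function $u$, in Section 4, via elliptic regularity applied to $\Delta u_x=-u_t-u^2u_x$; they are never established for the Galerkin approximations, which are spectral in $y$ only.

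The gap is repairable with the bounds that are actually available. Run Aubin--Lions one level lower: $u^N$ is bounded in $L^2(0,T;H^1_0(D))$ and $u^N_t$ in $L^2(0,T;L^2(D))$, hence (after extraction) $u^N\to u$ strongly in $L^2(0,T;L^2(D))$. Proposition 2.1 requires only $\|u^N\|_{H^1_0(D)}$ and $\|u^N_{xy}\|$, both of which are uniformly bounded by (3.26) and Lemma 3.3, so $u^N$ is uniformly bounded in $L^\infty(D\times\R^+)$; therefore
\begin{equation*}
\bigl\||u^N|^2-|u|^2\bigr\|_{L^2(D\times(0,T))}\le \|u^N+u\|_{L^\infty}\,\|u^N-u\|_{L^2(D\times(0,T))}\to 0,
\end{equation*}
while $u^N_x\rightharpoonup u_x$ weakly in $L^2(D\times(0,T))$. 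The product $|u^N|^2u^N_x$ is then uniformly bounded in $L^2$ and converges weakly to $u^2u_x$, which is all that is needed to pass to the limit in (3.34); the boundary condition $u_x(L,y,t)=0$ is best recovered afterwards, once the elliptic regularity of Section 4 gives $u_x$ enough smoothness for its trace to make sense. With this substitution, the remainder of your argument (diagonal extraction in $T$ and lower semicontinuity to obtain (3.35)--(3.36)) goes through as written.
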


\section{More regularity}

In order to complete the proof of the existence part of Theorem \ref{theorem1}, it suffices to show that
$$
u\in L^{\infty}\left(\R^+; H^2({D})\right)\cap u\in L^2\left(\R^+; H^3({D})\right), $$
$$u_t\in L^{\infty}\left(\R^+; L^2({D})\right)\cap u\in L^2\left(\R^+; H^1({D})\right).
$$
These inclusions will be proved in the following lemmas.

\begin{lemma}\label{lema2}
A strong solution from Lemma \ref{lema1} satisfies the following inequality:
\begin{align}\label{3.24}
&\int_{\R^+}\left\{\|u\|^2_{H^3({D})}(t)+\|\Delta u_x\|^2_{H^1({D})}(t)\right\}dt\notag\\&\le C\bigl(D,\|u_0\|,\|u_t\|(0),\|u_{yy}\|(0)\bigr)e^{(-\chi t)}.
\end{align}

\end{lemma}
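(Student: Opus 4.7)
The plan is to establish the $L^2(\R^+)$ integrability of $\|u\|^2_{H^3(D)} + \|\Delta u_x\|^2_{H^1(D)}$ by using the ZK equation algebraically to reduce third-order derivatives to quantities already controlled in Lemmas 3.1--3.5, combined with integration by parts and the trace bounds in (3.35).

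For the $\|\Delta u_x\|^2_{H^1(D)}$ part, rewrite the equation as $\Delta u_x = -u_t - u^2 u_x$, so that $\|\Delta u_x\|^2$ is immediately in $L^\infty \cap L^2(\R^+)$ by (3.34). Applying $\partial_x$ and $\partial_y$ gives
\begin{align*}
(\Delta u_x)_x &= -u_{xt} - 2uu_x^2 - u^2 u_{xx},\\
(\Delta u_x)_y &= -u_{yt} - 2uu_xu_y - u^2 u_{xy}.
\end{align*}
The $u_{xt}, u_{yt}$ contributions are $L^2$-in-time by (3.35). The cubic contributions are estimated by pulling out $\sup_D|u|^2$, bounded via Proposition 2.1 and (3.34) by $Ce^{-\chi t}$, and applying Lemma 2.1 to the remaining factors; smallness of $\|u_0\|$ and exponential decay of the lower-order norms close the bounds.

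For the $\|u\|^2_{H^3(D)}$ part, the derivatives $u_{xyy}$ and $u_{yyy}$ are already in $L^2(\R^+;L^2)$ by (3.35); from the equation, $u_{xxx} = -u_t - u^2 u_x - u_{xyy}$ is therefore also in $L^2(\R^+;L^2)$. The remaining mixed derivative $u_{xxy}$ is recovered by integration by parts: since $u(x,0,t)=u(x,B,t)=0$ forces $u_{xx}|_{y=0,B}=0$, and $u_x(L,y,t)=0$ forces $u_{xy}(L,y,t)=0$, integrating by parts in $x$ gives
\[
\|u_{xxy}\|^2 = -(u_{xxxy},u_{xy}) - \int_0^B u_{xxy}(0,y,t)\,u_{xy}(0,y,t)\,dy,
\]
where $u_{xxxy} = (\Delta u_x)_y - u_{xyyy}$ is controlled by the first step combined with (3.35), and the boundary contribution at $x=0$ is handled via the trace estimate $\|u_x(0,\cdot,t)\|^2_{H^2(0,B)}$ from (3.35).

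The main obstacle is the systematic bookkeeping of the cubic terms $(u^2u_x)_x$ and $(u^2u_x)_y$, each of which produces products containing two derivatives of $u$. Absorbing these requires repeated combination of the $L^\infty$-in-space bound from Proposition 2.1 with the $L^4$ Ladyzhenskaya inequalities from Lemma 2.1, together with the exponential decay of $\|u\|_{H^2}$ from Lemma 3.5 and the smallness hypothesis on $\|u_0\|$. A secondary subtlety is that $u_{xxy}$ does not appear directly in the ZK operator, so it must be recovered indirectly through the integration by parts above, making the trace bound in (3.35) essential.
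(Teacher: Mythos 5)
Your proposal diverges from the paper's proof at the decisive step, and the divergence is where it breaks. The paper does not differentiate the equation at all: it treats $\Delta u_x=-u_t-u^2u_x\equiv f\in L^2(\R^+;L^2(D))$ as an \emph{elliptic} problem for $u_x$, lifts the inhomogeneous boundary value $\phi(y,t)=u_x(0,y,t)\in L^2(\R^+;H^2(0,B))$ by $\Phi=\phi(y,t)(1-x/L)$, and applies global elliptic $H^2$ regularity to $v=u_x-\Phi$ with homogeneous Dirichlet data. This yields $u_x\in L^2(\R^+;H^2(D))$, hence all three derivatives $u_{xxx},u_{xxy},u_{xyy}$ at once, and together with (3.36) (which supplies $\nabla u_{yy}$, $u_t$ and the trace) gives \eqref{3.24}. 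Your route instead tries to recover $u_{xxy}$ by hand, and that is exactly the derivative the elliptic machinery exists to produce.

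Two concrete gaps in your integration-by-parts step. First, the identity
$\|u_{xxy}\|^2=-(u_{xxxy},u_{xy})-\int_0^B u_{xxy}(0,y,t)\,u_{xy}(0,y,t)\,dy$
requires the \emph{fourth-order} derivative $u_{xxxy}=(\Delta u_x)_y-u_{xyyy}$ to lie in $L^2$, but $u_{xyyy}$ is controlled by nothing proved so far: (3.36) bounds $\|\nabla u_{yy}\|^2=\|u_{xyy}\|^2+\|u_{yyy}\|^2$, which is third order, and no estimate in Lemmas 3.1--3.5 reaches fourth-order derivatives. So the pairing $(u_{xxxy},u_{xy})$ is not even known to make sense. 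Second, the boundary term involves $u_{xxy}(0,y,t)$, a second \emph{normal} derivative at $x=0$; the trace bound in (3.36) controls only $\|u_x(0,\cdot,\tau)\|_{H^2(0,B)}$, i.e.\ tangential $y$-derivatives of $u_x$ at $x=0$, and gives no information about $u_{xx}$ or $u_{xxy}$ on that boundary. A secondary but real problem occurs already in your bound for $(\Delta u_x)_x$: the terms $u^2u_{xx}$ and $2uu_x^2$ need $\|u_{xx}\|(t)$ (the latter via $\|u_x\|_{L^4}^2\le 2\|u_x\|\,\|\nabla u_x\|$), and pointwise-in-time control of $u_{xx}$ is not available at this stage of the paper --- it is only established in Lemma 4.2, itself via the same elliptic argument. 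All of these circularities and missing derivatives disappear in the paper's approach, because elliptic regularity converts $f\in L^2$ plus $H^2$ boundary data directly into $H^2$ control of $u_x$ without touching fourth-order derivatives or normal traces.
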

\begin{proof}
Taking into account \eqref{3.23}, (3.36) and Proposition \ref{prop1}, we write \eqref{3.22} in the form
$$\Delta u_x=-u_t-u^2u_x\equiv f(x,y,t)\in L^{\infty}(\R^+;L^2(D)),$$
$$u_x(0,y,t)\equiv \phi(y,t)\in L^2\left(\R^+;H^2(0,B)\right)\cap L^{\infty}\left(\R^+;H^1_0(0,B)\right),$$
$$u_x(x,0,t)=u_x(x,B,t)=u_x(L,y,t)=0.$$
Denote $\Phi(x,y,t)=\phi(y,t)(1-x/L)$. Obviously, $$\Phi\in L^2\left(\R^+;H^2({D})\right).$$ Then the function
$$
v=u_x-\Phi(x,y,t)
$$
solves in ${D}$ the elliptic problem
\begin{equation*}
\Delta v=f(x,y,t)-\Phi_{yy}(x,y,t)\in L^2(R^+;L^2(0,D)),\ \ v|_{\gamma}=0
\end{equation*}
which admits a unique solution $v\in L^2\left(\R^+;H^2({D})\right)$, see \cite{lady}.
Consequently, $u_x\in L^2\left(\R^+;H^2({D})\right).$ Therefore (3.36) implies (4.1). This completes the proof of Lemma 4.1..
\end{proof}

\begin{lemma}\label{lema3}
A strong solution given by Lemma \ref{lema1} satisfies the following inequality:
\begin{align}\label{3.25}
&\|u\|^2_{H^2({D})}(t)+\|\Delta u_x\|^2(t)\notag\\&\le C(D,\|u_0\|,\|u_t\|(0),\|u_{yy}\|(0))e^{(-\chi t)},\;\;t>0.
\end{align}

\end{lemma}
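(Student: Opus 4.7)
The bound $\|\Delta u_x\|^2(t)\le Ce^{-\chi t}$ is already supplied by Lemma \ref{lema1}, so the only missing piece is the pointwise estimate $\|u\|^2_{H^2(D)}(t)\le Ce^{-\chi t}$. Lemma \ref{lema1} already gives exponential decay of $\|u\|^2_{H^1_0(D)}(t)$ and of $\|\nabla u_y\|^2(t)=\|u_{xy}\|^2(t)+\|u_{yy}\|^2(t)$, so the only second derivative not yet controlled pointwise is $u_{xx}$. My plan is to recover $\|u_{xx}\|^2(t)$ by an elliptic regularity argument applied to $u_x$, analogous to Lemma \ref{lema2} but performed pointwise in $t$ rather than in the $t$-integrated sense.

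From \eqref{2.1}, $u_x$ solves the Dirichlet problem
$$
\Delta u_x=u_{xxx}+u_{xyy}=-u_t-u^2u_x\equiv f(x,y,t)\quad\text{in }D,
$$
with $u_x(0,y,t)=\phi(y,t)$, $u_x(L,y,t)=0$, and $u_x(x,0,t)=u_x(x,B,t)=0$ (the latter pair coming from $u|_\gamma=0$). Lemma \ref{lema1} gives $\|u_t\|^2(t)\le Ce^{-\chi t}$ and $\|\phi\|^2_{H^1_0(0,B)}(t)\le Ce^{-\chi t}$, while Proposition \ref{prop1} combined with Lemma \ref{lema1} yields $\sup_D u^2(\cdot,\cdot,t)\le Ce^{-\chi t}$, so that
$$
\|u^2u_x\|^2(t)\le\bigl(\sup_D u^2\bigr)^2(t)\,\|u_x\|^2(t)\le Ce^{-3\chi t},
$$
and hence $\|f\|^2(t)\le Ce^{-\chi t}$. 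Standard elliptic regularity for the non-homogeneous Dirichlet problem on the rectangle now produces
$$
\|u_x\|^2_{H^1(D)}(t)\le C\bigl(\|f\|^2_{L^2(D)}(t)+\|\phi\|^2_{H^{1/2}(0,B)}(t)\bigr)\le Ce^{-\chi t},
$$
whence $\|u_{xx}\|^2(t)\le Ce^{-\chi t}$. Combined with the $H^1_0$ and $\nabla u_y$ bounds from Lemma \ref{lema1}, this closes the estimate on $\|u\|^2_{H^2(D)}(t)$.

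The delicate point is the inhomogeneous trace at $x=0$. The simple lift $\Phi=\phi(y,t)(1-x/L)$ used in Lemma \ref{lema2} generates a term $\Phi_{yy}=\phi_{yy}(1-x/L)$ whose $L^2(D)$-norm is under control only in the $t$-integrated sense, via $\int_0^t\|\phi\|^2_{H^2(0,B)}d\tau$, and not pointwise in $t$. A clean way to bypass this is to take $\Phi$ to be the harmonic extension of $\phi$ into $D$: then $\Delta\Phi\equiv0$, no second $y$-derivative of $\phi$ appears, and $\|\Phi\|_{H^1(D)}\le C\|\phi\|_{H^{1/2}(0,B)}\le C\|\phi\|_{H^1_0(0,B)}$. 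The function $w=u_x-\Phi$ then solves $\Delta w=f$ with $w|_\gamma=0$, and standard rectangle regularity yields $\|w\|_{H^2(D)}\le C\|f\|_{L^2(D)}$. Summing the bounds on $w$ and $\Phi$ reproduces the elliptic inequality above and completes the proof of Lemma \ref{lema3}.
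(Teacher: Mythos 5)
Your proposal is correct, and it shares the overall skeleton of the paper's proof of Lemma \ref{lema3}: both take the $\|\Delta u_x\|^2(t)$ bound directly from Lemma \ref{lema1}, both reduce the missing control of $\|u_{xx}\|(t)$ to an elliptic Dirichlet problem for $u_x$ with right-hand side $f=-u_t-u^2u_x$ (whose pointwise-in-$t$ exponential decay in $L^2$ follows, exactly as you argue, from Lemma \ref{lema1} and Proposition \ref{prop1}), and both must face the obstruction you correctly isolate: the trace $\varphi(y,t)=u_x(0,y,t)$ decays pointwise in $t$ only in $H^1_0(0,B)$, so the naive lift $\Phi=\varphi(1-x/L)$ produces $\Phi_{yy}$, which is not controlled in $L^2(D)$ uniformly in $t$. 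Where you genuinely diverge is in how this obstruction is removed. The paper keeps the linear lift, regards $\Delta v=f-\Phi_{yy}$ as an equation with $H^{-1}$ right-hand side, and runs the weak-solution energy identity $-(v,\Delta v)(t)=-(v,f)(t)-(v_y,\Phi_y)(t)$, the integration by parts in $y$ ensuring that only $\|\Phi_y\|\le C\|\varphi\|_{H^1_0(0,B)}$ ever appears; this is elementary and self-contained (energy estimate plus the Steklov/Poincar\'e inequality), with no trace theory. You instead replace the lift by the harmonic extension of $\varphi$, so that no $\Phi_{yy}$ arises at all, at the price of invoking the trace inequality $\|\Phi\|_{H^1(D)}\le C\|\varphi\|_{H^{1/2}}$ --- which requires the extension of $\varphi$ by zero to all of $\gamma$ to lie in $H^{1/2}(\gamma)$, i.e.\ $\varphi\in H^{1/2}_{00}(0,B)$, guaranteed here precisely because $\varphi\in H^1_0(0,B)$ --- together with $H^2$ elliptic regularity on a domain with corners, valid since the rectangle is convex. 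Both routes are sound; the paper's buys elementarity, yours buys a cleaner right-hand side and more regularity ($w\in H^2(D)$ pointwise in $t$) than the lemma actually needs. Indeed, for your own conclusion the heavy machinery is dispensable: since $w|_{\gamma}=0$, the plain energy estimate $\|\nabla w\|^2(t)=-(w,f)(t)\le \|w\|(t)\,\|f\|(t)\le C\|\nabla w\|(t)\,\|f\|(t)$ already gives the $H^1$ bound on $w$ that you use, so only the trace estimate for the harmonic extension is essential to your argument.
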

\begin{proof}
Making use of (3.35) and acting as by the proof of Lemma 4.1, we get

\begin{align*}
	&\Delta u_x=-u_t-u^2u_x\equiv f(x,y,t)\in L^{\infty}(\R^+;L^2(D)),\\
	&u_x(0,y,t)\equiv \varphi(y,t)\in  L^{\infty}\left(\R^+;H^1(0,B)\right),\\
	&u_x(x,0,t)=u_x(x,B,t)=u_x(L,y,t)=0.
\end{align*}
Denote $\Phi(x,y,t)=\varphi(y,t)(1-x/L)$. Obviously, $$\Phi\in L^{\infty}\left(\R^+;H^{1}_0(\mathcal{D})\right).$$ Then the function
$$
v=u_x-\Phi(x,y,t)
$$
solves in ${D}$ the elliptic problem
\begin{equation}
	\Delta v=f(x,y,t)-\Phi_{yy}(x,y,t)\in L^{\infty}\left(\R^+;H^{-1}(D)\right),\ \ v|_{\gamma}=0.
	\end{equation}

By the elliptic equations  theory \cite{lady},\; there exists a unique weak solution to (4.3),
$$v\in L^{\infty}\left(R^+;H^{1}_0(D)\right).$$
Consider the scalar product$$
-(v,\Delta v)(t)=-(v,f-\Phi_{yy})(t)=-(v,f)(t)-(v_y,\Phi_y)(t)$$
that can be rewritten in the form
$$\|v_x\|^2(t)+\|v_y\|^2(t)\leq	\frac{1}{2}\big[\|f\|^2(t)+\|v\|^2(t)+\|\Phi_y\|^2(t)+\|v_y\|^2(t)\Big].$$
This implies
\begin{equation}
\|v\|^2_{H^1(D)}(t)\leq C[\|f\|^2(t)+\|\Phi_y\|^2(t)]\leq Ce^{(-\chi t)}.
\end{equation}
Hence, $u_x=v+\Phi \in L^{\infty}(\R^+;H^1(D))$ and since $ u,\;u_y\in L^{\infty}\left(R^+;H^{1}(D)\right),$\\
then$$u\in L^{\infty}\left(\R^+;H^2(D)\cap H^1_0(D)\right).$$
This completes the proof of Lemma 4.2.
\end{proof}
\
\begin{lemma} The strong solution from Lemmas 3.5-3.7 is uniquelly defined.
	\end{lemma}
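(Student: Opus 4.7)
The plan is a standard energy estimate on the difference of two putative solutions, carried out with the $(1+x)$-multiplier used throughout Section \ref{existence}, and closed by Gronwall. Let $u_1,u_2$ be two strong solutions with the same data, and set $w=u_1-u_2$. Writing $u_1^2u_{1x}-u_2^2u_{2x}=u_1^2w_x+(u_1+u_2)u_{2x}\,w$, the difference solves the linear variable-coefficient problem
\begin{equation*}
w_t+u_1^2w_x+(u_1+u_2)u_{2x}\,w+w_{xxx}+w_{xyy}=0,\qquad w\big|_{\gamma}=0,\ w_x(L,y,t)=0,\ w(x,y,0)=0.
\end{equation*}
Note that $w(L,y,t)=0$ forces $w_y(L,y,t)=0$, so the boundary term produced by integrating $w_{xyy}$ by parts against $(1+x)w$ vanishes, exactly as it did for a single solution in \textbf{Estimate II} of Lemma \ref{lem1}.

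Forming the scalar product $2((1+x)w,\,w_t+u_1^2w_x+\ldots)(t)=0$ and repeating the integrations by parts of that estimate yields
\begin{equation*}
\frac{d}{dt}\bigl((1+x),w^2\bigr)(t)+3\|w_x\|^2(t)+\|w_y\|^2(t)+\int_0^Bw_x^2(0,y,t)\,dy=-2\mathcal{N}(t),
\end{equation*}
with $\mathcal{N}(t)=\bigl((1+x)w,u_1^2w_x\bigr)(t)+\bigl((1+x)w^2,(u_1+u_2)u_{2x}\bigr)(t)$. By Proposition \ref{prop1} and Lemma \ref{lema3}, $u_i\in L^\infty(\mathbb{R}^+\times D)$ with a uniform-in-time bound, and $\|u_{ix}\|(t)$ is bounded uniformly in $t$. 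Using Cauchy--Schwarz, Young, and the $L^4$-interpolation $\|w\|_{L^4}^2\le 2\|\nabla w\|\,\|w\|$ from Lemma \ref{lemma1}, I would bound
\begin{align*}
|2((1+x)w,u_1^2w_x)|&\le\tfrac12\|w_x\|^2+C\|u_1\|_{L^\infty}^4\|w\|^2,\\
|2((1+x)w^2,(u_1+u_2)u_{2x})|&\le\tfrac12\|\nabla w\|^2+C\|u_1+u_2\|_{L^\infty}^2\|u_{2x}\|^2\|w\|^2,
\end{align*}
and absorb the $\|\nabla w\|^2$ contributions into the left-hand side. What is left is
\begin{equation*}
\frac{d}{dt}\bigl((1+x),w^2\bigr)(t)\le K(t)\,\bigl((1+x),w^2\bigr)(t),
\end{equation*}
where $K(t)$ is bounded on $\mathbb{R}^+$ by the uniform estimates of Lemmas \ref{lema2} and \ref{lema3}. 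Since $w(\cdot,\cdot,0)\equiv 0$, Gronwall forces $((1+x),w^2)(t)\equiv 0$, hence $u_1\equiv u_2$.

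The main technical worry is the coefficient $u_{2x}$ appearing in the linearization: the regularity at hand provides only $u_{2x}\in L^\infty(\mathbb{R}^+;H^1(D))$, not $L^\infty$ in space. This is sidestepped by pairing the uniform $L^2(D)$-bound on $u_{2x}$ (Lemma \ref{lema3}) with the Gagliardo--Nirenberg $L^4$-interpolation of $w$; the resulting factor $\|\nabla w\|\,\|w\|$ is absorbed into the dispersive dissipation on the left. No additional smallness of $\|u_0\|$ beyond the hypotheses of Theorem \ref{theorem1} is required for uniqueness.
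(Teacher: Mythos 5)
Your proof is correct, and its skeleton is the same as the paper's (Lemma 4.3): form the $(1+x)$-weighted energy identity for the difference of two solutions, absorb the nonlinear contributions into the dispersive dissipation $3\|w_x\|^2+\|w_y\|^2$ plus the trace term at $x=0$, invoke the uniform-in-time bounds of Proposition \ref{prop1} and Lemma \ref{lema3} to get a bounded Gronwall coefficient, and conclude from $w(\cdot,\cdot,0)\equiv 0$. The genuine difference lies in how the nonlinearity is decomposed. You linearize, $u_1^2u_{1x}-u_2^2u_{2x}=u_1^2w_x+(u_1+u_2)u_{2x}\,w$, which turns the derivative $u_{2x}$ into a coefficient; since the regularity at hand gives only $u_{2x}\in L^{\infty}(\R^+;L^2(D))$, you must compensate with the interpolation $\|w\|^2_{L^4(D)}\le 2\|\nabla w\|\,\|w\|$ from Lemma \ref{lemma1} and then absorb $\|\nabla w\|^2$ into the left-hand side --- correct, but an extra step that has to be done carefully. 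The paper instead keeps the difference in divergence form, $u_1^2u_{1x}-u_2^2u_{2x}=\tfrac13\partial_x\bigl[(u_1^2+u_1u_2+u_2^2)\,w\bigr]$, and integrates by parts so that the derivative falls on $(1+x)w$; the resulting term $\bigl((u_1^2+u_1u_2+u_2^2)w,\;w+(1+x)w_x\bigr)(t)$ is then controlled solely by $M=\sup_D|u_1^2+u_1u_2+u_2^2|$, which is finite and independent of $t$ by Proposition \ref{prop1} and the $H^2$-decay of Lemma \ref{lema3}, so no bound on $u_{ix}$ and no interpolation are needed. Both routes close without any smallness assumption beyond Theorem \ref{theorem1}; the paper's factorization is marginally more economical, while yours is the generic linearization argument and would carry over to nonlinearities that do not have an exact antiderivative structure.
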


\begin{proof}
Let $u_1$ and $u_2$ be two distinct solutions to
\eqref{2.1}-\eqref{2.4}. Then $z=u_1-u_2$ solves the
following IBVP:
\begin{align}
Az&\equiv z_t++z_x+\frac12(u_1^3-u_2^3)_x+\Delta z_x=0\ \ \text{in}\ {Q}\;\;t>0,\label{5.1}\\
&z(0,y,t)=z(L,y,t)=z_x(L,y,t)=z(x,0,t)\notag\\
&=z(x,B,t)=0,\\
&z(x,y,0)=0,\ \ (x,y)\in{D}.\label{5.3}
\end{align}
From the scalar product $$2\left(Az,(1+x)z\right)(t)=0,$$ we infer
\begin{align}
\frac{d}{dt}&((1+x),z^2)(t)(t)+3\|z_x\|^2(t)+\|z_y\|^2(t)+\int_{0}^Bz_x^2(0,y,t)\,dy\notag\\
&=-2((1+x)(u^3_1-u^3_2)_x,z)(t)=2((u_1^2+u_1u_2+u_2^2)z,z\notag\\&+(1+x)z_x)(t)\leq 2\|z_x\|^2(t)+C(M)((1+x),z^2)(t),
\end{align}

where $M=\sup_D|u_1^2+u_1u_2+u_2^2|(x,y,t)$. Due to Proposition 2.1 and (4.2), $M$ does not depend on $t>0.$ Hence (4.8) becomes
$$\frac{d}{dt}((1+x),z^2)(t)(t)\leq C(M)((1+x),z^2)(t).$$
Since $z(x,y,0)\equiv 0,$ by the Gronwall lemma,$$\|z\|^2(t)\leq ((1+x),z^2)(t)\equiv 0,\;\;t>0.$$

The proof of Lemma 4.3 is  complete.
\end{proof}

{\bf Conclusions.} An initial-boundary value problem for the 2D critical generalized Zakharov-Kuznetsov equation posed on  rectangles has been considered. Assuming small initial data, the existence of a regular global solution, uniqueness and exponential decay of  $\|u\|(t)_{H^2(D)}$ have been established.

{\bf Acknowledgements.} 
This research has been supported  by Funda\c{c}\~ao Araucaria , Parana, Brazil; convenio No 307/2015, Protocolo No 45.703.\\ The author appreciates profound and concrete comments of the reviewer.

{\bf Data Availability Statement.}
This research does not contain any data necessary to confirm. 

\medskip

\end{document}